\documentclass[12pt,a4paper]{amsart}
\usepackage{amsmath, amssymb}
\usepackage{hyperref}
\usepackage[english]{babel}
\theoremstyle{plain}

\usepackage{mathdots}
\usepackage{tikz-cd}

\usepackage{enumitem}
\usepackage{tikz-cd}

\usepackage{cite}

\newtheorem{theorem}{Theorem}
\newtheorem{lemma}{Lemma}[section]

\newtheorem{example}{Example}

\begin{document}
	
	\title{Lie and Jordan Isomorphisms of \\ Algebras  of Triangular Matrices \\ over Associative Rings}

		\author
		{Oksana Bezushchak}
		\address{Oksana Bezushchak: Faculty of Mechanics and Mathematics, Taras Shevchenko National University of Kyiv, Volodymyrska, 60, Kyiv 01033, Ukraine}
		\email{bezushchak@knu.ua}

	\keywords{Jordan isomorphism, Lie isomorphism, triangular matrix algebra}
	\subjclass[2020]{Primary 16W10, 17B40, Secondary 17A36}
	\maketitle
	
	\begin{abstract}
		We describe Lie and Jordan isomorphisms of algebras of triangular matrices over associative rings. 
	\end{abstract}

	\section{Introduction}

Throughout the paper, we consider associative algebras over a commutative associative  ring $\Phi$ containing $1$.

Given two associative algebras $A$ and $B$, a $\Phi$-linear mapping ${\varphi \colon A \to B}$ is called a \textbf{Jordan homomorphism} if
\[	\varphi(x^2)=\varphi(x)^2 \quad \text{for all } x\in A;	\tag{I}
	\] and
\[	\varphi(xyx)=\varphi(x)\varphi(y)\varphi(x) \quad \text{for all } x,y\in A.\tag{II}
\] 

If the algebra $A$ has no additive $2$-torsion, then (II) follows from (I).
For elements $x,y\in A$, denote $x\circ y = xy+yx.$ Assumption (I) implies that $\varphi(x\circ y)=\varphi(x)\circ\varphi(y).$

An $\Phi$-linear mapping $\psi : A\to B$ is called a \textbf{Lie homomorphism} if
\[
\varphi([x,y])=[\varphi(x),\varphi(y)]
\]
for all $x,y\in A$, where $[x,y]=xy-yx$ is the
commutator.

Clearly, a homomorphism of associative algebras
is a Jordan (resp. Lie) homomorphism.

An $\Phi$-linear mapping $\chi : A\to B$ is called an
\textbf{antihomomorphism} if $\chi(xy)=\chi(y)\chi(x)$ for all
elements $x,y\in A$. If $\chi$ is an antihomomorphism,
then $\chi$ is a Jordan homomorphism,
whereas $-\chi$ is a Lie homomorphism.

Following earlier works of L.K.~Hua~\cite{Hua},
N.~Jacobson and C.E.~Rickart \cite{Jacobson2}, I.~Herstein \cite{Her}; M.~Bre\v{s}ar \cite{Bresar,Bresar2} showed that a Jordan
homomorphism onto a semiprime algebra
can be represented as a sum of a homomorphism and an antihomomorphism. For the current state of the theory, see \cite{Bresar_Zelmanov}.

Speaking of Lie isomorphisms, M.~Bre\v{s}ar \cite{Bresar7}
and, later, K.~Beidar, M.~Bre\v{s}ar, M.~Chebotar  and W.S.~Martindale \cite{Bei_Bre_Cheb_Mart_1,Bei_Bre_Cheb_Mart_2,Bei_Bre_Cheb_Mart_3} completely proved Herstein's conjectures
(see \cite{Herstein}) on Lie isomorphisms of prime rings
without additive $2$-torsion.

The paper \cite{Bezushchak_Kashuba_Zelmanov} describes Lie isomorphisms of
rings that are not necessarily semiprime
and $2$-torsion free, but contain three pairwise
orthogonal full idempotents.

Another class of rings that has attracted attention in the literature is that of rings of upper triangular matrices over associative rings. These rings are not semiprime and may not contain full proper idempotents.

Let $R$ be an associative $\Phi$-algebra with $1$, not necessarily commutative, and let $n\geq 3$.
Consider the algebra of upper triangular
$n\times n$ matrices:
	$$ 
T(R) = 
\left(
\begin{array}{cccc}
	\Phi & R & \cdots & R \\
	0 & \ddots & \ddots & \vdots \\
	\vdots & \ddots & \ddots & R \\
	0 & \cdots & 0 & \Phi
\end{array}
\right).$$
Consider also the subalgebras:
\[
\operatorname{Diag}(\Phi)=
\left(
\begin{array}{cccc}
	\Phi & 0 & \cdots & 0 \\
	0 & \ddots & \ddots & \vdots \\
	\vdots & \ddots & \ddots & 0 \\
	0 & \cdots & 0 & \Phi
\end{array}
\right)),
\qquad
 \mathfrak{A}(R)=
\left(
\begin{array}{cccc}
	0 & R & \cdots & R \\
	0 & \ddots & \ddots & \vdots \\
	\vdots & \ddots & \ddots & R \\
	0 & \cdots & 0 & 0
\end{array}
\right),
\]
\[
T(R)=\operatorname{Diag}(\Phi)\ltimes  \mathfrak{A}(R).
\]

Assuming that $\Phi=R$, D.~Dokovi\'c~\cite{Dokovic}
described $\Phi$-linear Lie automorphisms of
the algebra $T(\Phi)$ under the assumption
that $\Phi$ contains no proper idempotents.
In~\cite{Cao1}, Y.~Cao removed this assumption
and extended the description to arbitrary commutative rings $\Phi$ with~$1$.
In~\cite{Cao2}, Y.~Cao described Lie automorphisms of
nilpotent algebras $N(\Phi)$.

K.I.~Beidar, M.~Bre\v{s}ar, and M.A.~Chebotar \cite{BBC} described Jordan isomorphisms of triangular matrix algebras over commutative rings without proper idempotents. C.-K.~Liu and W.-Y.~Tsai~\cite{Liu_Tsai,Liu_Tsai_1} described
Jordan isomorphisms of the algebra $T(\Phi)$,
where $\Phi$ is a commutative ring without $2$-torsion.
D.~Benkovi\v{c}~\cite{Benkovic} described Jordan homomorphisms
of the algebra $T(\Phi)$. In~\cite{Bezushchak2}, we extended
this result to algebras of triangular matrices
$T(R)$, where $R$ is an arbitrary associative
$\Phi$-algebra.

The purpose of this paper is to describe
Lie and Jordan isomorphisms from $T(R)$ to $T(S)$,
where $R$ and $S$ are associative $\Phi$-algebras.

\section{Examples of Isomorphisms}

 For $1\le i,j\le n$ and $a\in R$, let $e_{ij}(a)$ denote the matrix whose $(i,j)$-entry is $a$ and whose remaining entries are zero.

\begin{example}
	Let $R$ and $S$ be associative $\Phi$-algebras,
	and let $\varphi \colon R \to S$ be an isomorphism.
	Then $\varphi$ naturally induces an isomorphism
	\[
	\widetilde{\varphi} \colon T(R) \to T(S), \qquad
	(a_{ij}) \mapsto (\varphi(a_{ij})), \quad a_{ij}\in R.
	\]	\end{example}
	
\begin{example} Let $\psi \colon R \to S$ be an antiisomorphism. Then
	\[
	\widetilde{\psi} \colon T(R) \to T(S), \qquad
	e_{ij}(a) \mapsto e_{n+1-j,n+1-i}(\psi(a)),
	\quad 1\le i,j\le n,\ a\in R,
	\]
	is an antiisomorphism of the algebras $T(R)$
	and $T(S)$.
	
	Let
	\[
	S^{\mathrm{op}}=\{a^{\mathrm{op}} \mid a\in S\}
	\]
	be the same $\Phi$-module as $S$, endowed with the multiplication
	\[
	a^{\mathrm{op}}\cdot b^{\mathrm{op}}=(ba)^{\mathrm{op}}.
	\]
	Then the mapping
	\[
	S \to S^{\mathrm{op}}, \qquad a\mapsto a^{\mathrm{op}},
	\]
	is an antiisomorphism.
	
	The mapping
	\[
	\tau_S \colon T(S)\to T(S^{\mathrm{op}}), \qquad
	e_{ij}(a)\mapsto e_{n+1-j,n+1-i}(a^{\mathrm{op}}),
	\quad 1\le i,j\le n,\ a\in S,
	\]
	is an antiisomorphism of the algebras $T(S)$ and $T(S^{\mathrm{op}})$.
		
	If $\psi$ is an antiisomorphism, then the mapping $-\psi$ is a Lie isomorphism. \end{example}
	
\begin{example} Consider the upper unitriangular group
		\[
		UT(n,S)=
		\left(
		\begin{array}{cccc}
			1_S & S & \cdots & S \\
			0 & \ddots & \ddots & \vdots \\
			\vdots & \ddots & \ddots & S \\
			0 & \cdots & 0 & 1_S
		\end{array}
		\right),
		\]
		where $1_S$ denotes the identity element of $S$. For an
		arbitrary element $g\in UT(n,S)$ the conjugation
		\[
		\widehat{g} : x\to gxg^{-1}, \quad x\in T(S),
		\]
		is an automorphism of the algebra $T(S)$. \end{example}
		
\begin{example} Let $S^{*}$ be the multiplicative group of
		invertible elements of the algebra $S$. Let
		\[
		\operatorname{Diag}(S^{*})=
		\{\operatorname{diag}(u_1,\ldots,u_n)\mid u_i\in S^{*}\}.
		\]
		For any matrix $g\in \operatorname{Diag}(S^{*})$, the
		conjugation
		\[
		\widehat{g} : x\to gxg^{-1}, \quad x\in T(S),
		\]
		is an automorphism of $T(S)$.
\end{example}

	Let $T(R)^{(-)}$ and $T(S)^{(-)}$ denote the adjoint Lie algebras of the associative algebras $T(R)$ and $T(S)$, respectively; that is, the associative multiplication is replaced by the commutator
	\[
	[x,y]=xy-yx.
	\]
	
	Let $I_R$ and $I_S$ be the identity $n\times n$ matrices
	in $T(R)$ and $T(S)$, respectively. Then $\Phi\cdot I_R$ and $\Phi\cdot I_S$
	are central ideals of $T(R)^{(-)}$ and $T(S)^{(-)}$, respectively.

\begin{theorem}\label{Th_1}
Let $R$ and $S$ be associative unital
$\Phi$-algebras, and let $n\geq 3$. Suppose that
\[
\varphi \colon T(R)^{(-)} \slash (\Phi\cdot I_R)\to T(S)^{(-)} \slash (\Phi\cdot I_S)
\]
is an isomorphism of Lie algebras. Then
there exist an idempotent $h\in \Phi$, an
isomorphism $$\psi_1 \colon hR\to hS,$$ an antiisomorphism
$$\psi_2 \colon (1-h)R\to (1-h)S,$$ and invertible
elements $g_1\in UT(n,S)$ and $g_2\in \operatorname{Diag}(S^{*})$ such
that $\varphi$ lifts to
\[
\widehat{g}_1\,\widehat{g}_2\,(\widetilde{\psi}_1-\widetilde{\psi}_2).
\] \end{theorem}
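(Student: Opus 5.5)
We would prove Theorem~\ref{Th_1} by recovering, inside the Lie algebra $L_R=T(R)^{(-)}/(\Phi\cdot I_R)$, enough of the associative structure of $T(R)$ in purely Lie-theoretic terms that $\varphi$ is forced to respect it.

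\textbf{Step 1: intrinsic description of $\mathfrak{A}(R)$ and its filtration.} Let $\mathfrak{A}^{(k)}$ denote the matrices supported on the diagonals $j-i\ge k$. Since $\Phi\cdot I$ is central, $\varphi$ maps the derived series and the lower central series of $L_R$ onto those of $L_S$. We would show that the image of $\mathfrak{A}(R)$ in $L_R$ is characterized Lie-theoretically, for instance as the nilradical, or as the set of $x$ with $\operatorname{ad}x$ nilpotent modulo the centre. One route is to use $[T,T]\subseteq\mathfrak{A}$, together with the fact that $[\operatorname{Diag}(\Phi),\mathfrak{A}]+[\mathfrak{A},\mathfrak{A}]=\mathfrak{A}$ holds because of the elements $e_{ii}-e_{jj}$; this gives $[L_R,L_R]=\mathfrak{A}(R)$. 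We would then obtain $\mathfrak{A}^{(k)}$ as iterated commutators $[L,\mathfrak{A}^{(k-1)}]$, check that $\varphi(\mathfrak{A}^{(k)}(R))=\mathfrak{A}^{(k)}(S)$, and in particular that $\varphi$ induces a $\Phi$-linear isomorphism $\mathfrak{A}^{(n-1)}(R)=e_{1n}(R)\to e_{1n}(S)$.

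\textbf{Step 2: the diagonal.} Consider the induced map $\bar\varphi\colon L_R/\mathfrak{A}(R)\cong\operatorname{Diag}(\Phi)/\Phi I\to\operatorname{Diag}(\Phi)/\Phi I$. The classes $e_{ii}$ act on $\mathfrak{A}/\mathfrak{A}^{(2)}=\bigoplus e_{i,i+1}(R)$ with "weights" $\epsilon_i-\epsilon_{i+1}$. We would show that $\bar\varphi$ permutes the weight spaces compatibly with the path graph $1-2-\cdots-n$, so that it is induced either by the identity or by the flip $i\mapsto n+1-i$ (followed by $-1$). Because $\Phi$ may have idempotents, this has to be done locally: a decomposition $1=h+(1-h)$ arises, and on $h\Phi$ the identity case occurs while on $(1-h)\Phi$ the flip case occurs. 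We expect this to be the main obstacle, since without a connectedness assumption on $\operatorname{Spec}\Phi$ we must construct $h$ explicitly from the coefficients of $\varphi(e_{11})$ modulo $\mathfrak{A}$. We would first establish that these coefficients are idempotents summing to $1$, using $[e_{11},[e_{11},e_{12}(a)]]=e_{12}(a)$ and the analogous eigenvector relations, and then split $\varphi$ as $h\varphi\oplus(1-h)\varphi$. On the $(1-h)$ part we compose with $\tau$ and $-1$, reducing to the case $h=1$.

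\textbf{Step 3: normalizing by conjugation.} With the diagonal now fixed modulo $\mathfrak{A}$, we lift $\varphi(e_{ii})=e_{ii}+a_i$ with $a_i\in\mathfrak{A}(S)$ (choosing the lift modulo $\Phi I$ so that the traces match). The images form commuting elements whose $\operatorname{ad}$-actions are diagonalizable with weights $0,\pm1$. By the standard successive elimination over the superdiagonals, which uses the relations $[e_{ii},x]$, we find $g_1\in UT(n,S)$ such that $\widehat{g}_1^{-1}\varphi$ fixes each $e_{ii}$. Then the Peirce components are preserved: $\varphi(e_{ij}(a))=e_{ij}(\theta_{ij}(a))$ for $\Phi$-linear bijections $\theta_{ij}$.

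\textbf{Step 4: the additive maps give an isomorphism.} From $[e_{ij}(a),e_{jk}(b)]=e_{ik}(ab)$ for $i<j<k$ we get $\theta_{ik}(ab)=\theta_{ij}(a)\theta_{jk}(b)$. Set $u_i=\theta_{i,i+1}(1)$. Using $n\ge3$, we show that the $u_i$ are invertible: surjectivity of $\theta_{13}$ together with $\theta_{13}(a)=\theta_{12}(a)u_2=u_1\theta_{23}(a)$ gives one-sided inverses, and a symmetric argument gives the others. Conjugating by a suitable $g_2\in\operatorname{Diag}(S^*)$ makes all $u_i=1$, after which every $\theta_{ij}$ equals a single map $\psi_1$. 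Finally, $\psi_1(ab)=\psi_1(a)\psi_1(b)$ follows from the three-index relation, and $\psi_1(1)=1$. This yields the claimed lift $\widehat{g}_1\widehat{g}_2(\widetilde\psi_1-\widetilde\psi_2)$.
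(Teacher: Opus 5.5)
Your architecture matches the paper's. You pin down $\varphi(e_{11})$ modulo $\mathfrak{A}(S)+\Phi\cdot I_S$ and extract $h$ (Lemma~\ref{Lm1}), split along $h$ and $1-h$, and handle the $(1-h)$-part via $-\tau_S$ (Lemma~\ref{Lm3}). You then normalize the $e_{ii}$ by unitriangular conjugation, read off Peirce components, fix the $u_{i,i+1}$ by a diagonal conjugation, and get multiplicativity from $[e_{12}(a),e_{23}(b)]=e_{13}(ab)$ (Lemma~\ref{Lm2}). Steps~3 and~4 are acceptable sketches of Lemma~\ref{Lm2}. One small repair: for $u_1$ and $u_{n-1}$, surjectivity gives only one-sided inverses. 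Use instead that $x\mapsto xu_{i+1}$ and $x\mapsto u_ix$ are the bijections $\theta_{i,i+2}\theta_{i,i+1}^{-1}$ and $\theta_{i,i+2}\theta_{i+1,i+2}^{-1}$ of $S$, which forces two-sided invertibility.

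The genuine gap is Step~2, which carries essentially all the content and which you only announce. The relations you name do not suffice. Idempotency of $\operatorname{ad}(e_{11})$ together with $[e_{11},e_{1n}(a)]=e_{1n}(a)$ only shows that the differences $\alpha_i-\alpha_j$ of the diagonal of $\varphi(e_{11})$ are idempotents with $\alpha_1-\alpha_n=1$. Over $\Phi=\mathbb{F}_2$ with $n=4$, the diagonal $(1,0,1,0)$ passes these tests. Orthogonality needs $(\alpha_i-\alpha_j)(\alpha_j-\alpha_k)=0$, which the paper gets from $[[e_{11},T(R)],[e_{11},T(R)]]=0$ (claim~3)). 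More seriously, even then $E_k=e_{11}+\cdots+e_{kk}$ with $2\le k\le n-2$ satisfies every such condition, since $\operatorname{ad}(E_k)$ is also idempotent with abelian image and fixes $e_{1n}(R)$. Ruling out these middle steps is the work of claims~4)--8) of Lemma~\ref{Lm1}, where a second element enters. Since $[[T(R),e_{11}],e_{22}]=e_{12}(R)$ meets $[\mathfrak{A}(R),\mathfrak{A}(R)]$ trivially, the diagonal of $h\varphi(e_{22})$ must be constant. Hence $\operatorname{ad}\varphi(e_{22}(h))$ is nilpotent, a contradiction.

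Your weight-space picture cannot do this on its own either. The simple roots $\epsilon_i-\epsilon_{i+1}$ form a basis of the dual of $\operatorname{Diag}(\Phi)/\Phi\cdot I$, so every permutation of the summands $e_{i,i+1}(R)$ of $\mathfrak{A}/\mathfrak{A}^{(2)}$ is compatible with some linear automorphism of the diagonal. The ``path graph'' has to come from the bracket $\mathfrak{A}/\mathfrak{A}^{(2)}\times\mathfrak{A}/\mathfrak{A}^{(2)}\to\mathfrak{A}^{(2)}/\mathfrak{A}^{(3)}$, under which $e_{i,i+1}(R)$ and $e_{j,j+1}(R)$ bracket nontrivially iff $|i-j|=1$. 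A workable version of your idea goes as follows:
\begin{enumerate}
\item Run the idempotency/abelianness argument for every $E_j$.
\item This yields, on the blocks of a finite idempotent decomposition of $\Phi$, a permutation $\sigma$ with $\varphi(E_j)\equiv E_{\sigma(j)}$ modulo $\mathfrak{A}(S)+\Phi\cdot I_S$.
\item Adjacency under the bracket then forces $\sigma$ to be the identity or the flip.
\end{enumerate}
That would be a clean alternative to claims~4)--8), but none of it is in the proposal.

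There is also a slip in Step~1: $[L_R,\mathfrak{A}^{(k-1)}]=\mathfrak{A}^{(k-1)}$, because $[e_{ii},e_{ij}(a)]=e_{ij}(a)$. The filtration is the lower central series of $\mathfrak{A}$ itself, $[\mathfrak{A},\mathfrak{A}^{(k-1)}]=\mathfrak{A}^{(k)}$.
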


\begin{theorem}\label{Th_2}
Let $R$ and $S$ be associative unital
$\Phi$-algebras, and let $n\geq 3$. Suppose that
\[
\varphi \colon T(R)\to T(S)
\]
is a Jordan isomorphism. Then there exist
an idempotent $h\in\Phi$, an isomorphism
\[
\psi_1 \colon hR\to hS,
\]
an antiisomorphism
\[
\psi_2 \colon (1-h)R\to(1-h)S,
\]
and invertible elements $g_1\in UT(n,S), $ $ g_2\in \operatorname{Diag}(S^{*}),$ such that
\[
\varphi=\widehat{g}_1\,\widehat{g}_2\,(\widetilde{\psi}_1+\widetilde{\psi}_2).
\] \end{theorem}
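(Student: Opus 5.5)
We prove Theorem~\ref{Th_2}. The plan is to reduce the Jordan isomorphism to combinatorial data on the diagonal idempotents, straighten it by the conjugations of Examples~3 and~4, and then read off an isomorphism and an antiisomorphism of the coefficient algebras.

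\textbf{Step 1: images of the diagonal idempotents.} Put $e_i=e_{ii}(1)$. Each $f_i=\varphi(e_i)$ is an idempotent in $T(S)$. For $i\neq j$ we have $e_i\circ e_j=0$, hence $f_i\circ f_j=0$. By (II), $f_if_jf_i=\varphi(e_ie_je_i)=0$. Multiplying $f_if_j+f_jf_i=0$ on the left by $f_i$ gives $f_if_j+f_if_jf_i=0$, so $f_if_j=0$. Thus the $f_i$ are pairwise orthogonal idempotents with $\sum f_i=\varphi(I_R)=I_S$; the latter holds because $\varphi$ is bijective and $\varphi(1)$ is then a Jordan unit, hence the identity.

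Project to the diagonal $\operatorname{Diag}(\Phi)\cong\Phi^n$ (the coefficient ring of the diagonal is commutative). The images $\bar f_i$ form a complete orthogonal system of idempotents of $\Phi^n$. Decompose $\Phi$ by the finitely many idempotents this produces, i.e.\ pass to the components $\Phi\epsilon$ for the atoms $\epsilon$ of the Boolean algebra generated by the entries of the $\bar f_i$. On each component, $\bar f_i=\sum_{k\in P_i}e_k$ for some partition $\{P_i\}$ of $\{1,\dots,n\}$.

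\textbf{Step 2: each $\bar f_i$ is a single $e_{\sigma(i)}$, and $\sigma$ is the identity or the reversal.} Bijectivity, together with the fact that $\varphi$ maps the Peirce spaces $e_iT(R)e_j+e_jT(R)e_i$ onto the corresponding Jordan–Peirce spaces of the $f$'s, forces each $|P_i|=1$. The argument is a rank count: $\mathfrak{A}$ has nonzero off-diagonal pieces in all $\binom n2$ positions, and the Jordan Peirce decomposition must match. This gives a permutation $\sigma$.

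Next consider the Jordan-nilpotency structure. The subspaces $e_iTe_{i+1}$ are characterised Jordan-theoretically: $\mathfrak{A}(R)^{\circ k}$, the powers of $\mathfrak{A}$ under $\circ$ and triple products $xyx$, give the filtration by distance to the diagonal, and $\varphi$ maps $\mathfrak{A}(R)$ onto $\mathfrak{A}(S)$ because $\mathfrak{A}$ is the Jordan radical. Since $n\ge3$, the adjacency graph of the path $1-2-\dots-n$ is preserved, so $\sigma$ is a graph automorphism of the path, i.e.\ the identity or $i\mapsto n+1-i$. The components where $\sigma$ is the identity give the idempotent $h$. Composing on the $(1-h)$-part with $\tau$ from Example~2, we may assume $\sigma=\mathrm{id}$ everywhere, at the cost of replacing $S$ by $hS\oplus(1-h)S^{\mathrm{op}}$.

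\textbf{Step 3: conjugating the $f_i$ to the $e_i$.} A complete orthogonal system of idempotents $f_i$ in $T(S)$ with $\bar f_i=e_i$ is conjugate to $\{e_i\}$ by an element of $UT(n,S)$. The standard construction is $g=\sum_i f_ie_i$, which is unitriangular, and one checks $g e_i g^{-1}=f_i$. This produces $g_1$, and after applying $\widehat g_1^{-1}$ we have $\varphi(e_i)=e_i$. Then $\varphi$ preserves each Peirce space $e_iTe_j$, because $e_iTe_j=\{x: e_i\circ x=x=x\circ e_j\}$ for $i<j$. Consequently $\varphi(e_{ij}(a))=e_{ij}(\theta_{ij}(a))$ with $\theta_{ij}\colon R\to S$ additive bijections.

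\textbf{Step 4 (main obstacle): from the maps $\theta_{ij}$ to a single $\psi$.} For $i<j<k$ the identity $e_{ij}(a)\circ e_{jk}(b)=e_{ik}(ab)$ gives
\[
\theta_{ik}(ab)=\theta_{ij}(a)\theta_{jk}(b).
\]
Set $u_{ij}=\theta_{ij}(1)$. Using surjectivity and $n\ge3$, we want to show that the $u_{i,i+1}$ are invertible; this is where surjectivity of $\theta_{ik}$ combined with the factorisation enters. Conjugating by $g_2=\operatorname{diag}(1,u_{12},u_{12}u_{23},\dots)$ normalises $u_{ij}=1$. The displayed identity then makes $\psi=\theta_{ij}$ independent of $(i,j)$ and multiplicative, that is, $\psi(ab)=\psi(a)\psi(b)$.

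Untwisting the $\tau$ on the $(1-h)$-part turns $\psi$ into an antiisomorphism there, so we obtain $\psi_1$ and $\psi_2$. The delicate points are the invertibility of the $u$'s and making the decomposition by $h$ compatible with the conjugations $g_1,g_2$ and with the diagonal $\Phi$-entries. I expect the hardest part to be the bookkeeping of the idempotent splitting of $\Phi$ in Steps~1 and~2, handled componentwise.
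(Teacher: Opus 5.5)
Your argument is correct in outline, and your Step~4 is essentially the end of the proof of Lemma~\ref{Lm5}. Steps~1--3, however, take a genuinely different route from the paper.

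The paper never treats all $\varphi(e_{ii}(1_R))$ at once. In Lemma~\ref{Lm4} it gets $\varphi(\mathfrak{A}(R))=\mathfrak{A}(S)$ from the identity $(a\circ b)\circ c-(a\circ c)\circ b=[a,[b,c]]$. It then gets $\varphi(e_{1n}(R))=e_{1n}(S)$, since $e_{1n}(R)$ is the Jordan annihilator of $\mathfrak{A}$. From $\varphi(e_{11}(1_R))\circ e_{1n}(b)=e_{1n}(b)$ and $\varphi(e_{11}(1_R))\,\mathfrak{A}(S)\,\varphi(e_{11}(1_R))=0$ it reads off that the diagonal of $\varphi(e_{11}(1_R))$ is $(h,0,\dots,0,1-h)$. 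This produces the single idempotent $h$ with no Boolean decomposition of $\Phi$. Lemma~\ref{Lm5} then straightens $\varphi(e_{22}(h)),\dots,\varphi(e_{nn}(h))$ one at a time (the induction $P(t)\Rightarrow P(t+1)$). At each step it locates the diagonal of $\varphi(e_{t+1,t+1}(h))$ from orthogonality and the filtration by $\mathfrak{A}(S)^t$, and conjugates by a new unitriangular matrix. Lemma~\ref{Lm6} handles $1-h$ through $\tau_S$.

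You instead split $\Phi$ into finitely many blocks on which all diagonal entries are $0$ or $1$, and obtain a permutation $\sigma$. You recognise the identity/reversal dichotomy as the automorphism group of the path graph, and conjugate all idempotents simultaneously by $g=\sum_i f_ie_i$. Your route is more transparent and replaces the induction by one explicit conjugating element. The paper's route is built to run parallel to the Lie case (Lemmas~\ref{Lm1}--\ref{Lm3}). There the images of $e_{ii}(1_R)$ are only cosets modulo $\Phi\cdot I_S$ and are not idempotents, so neither orthogonality nor $\sum_i f_ie_i$ is available.

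Three justifications need repair, none of them serious.

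(i) $\mathfrak{A}(R)$ is not ``the Jordan radical'' in general. If $N\neq0$ is a nilpotent ideal of $\Phi$, then $\mathfrak{A}(R)+\operatorname{Diag}(N)$ is a strictly larger nilpotent ideal of $T(R)$. You do not need this claim. The conjugation of Step~3 can be done before $\sigma$ is known (use $g=\sum_k f_{\sigma^{-1}(k)}e_k$ blockwise). After it, the off-diagonal Jordan--Peirce spaces $f_iT(S)f_j+f_jT(S)f_i$ visibly sum to $\mathfrak{A}(S)$, so $\varphi(\mathfrak{A}(R))=\mathfrak{A}(S)$. Since $\mathfrak{A}\circ\mathfrak{A}$ spans $\mathfrak{A}^2$ (via $e_{ij}(a)\circ e_{jk}(b)=e_{ik}(ab)$), also $\varphi(\mathfrak{A}(R)^2)=\mathfrak{A}(S)^2$, which is exactly what the adjacency argument uses.

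(ii) Replace the ``rank count'' with a direct argument. If $P_i=\emptyset$ on a block $\epsilon\neq0$, then $\epsilon f_i$ is an idempotent lying in the nilpotent ideal $\mathfrak{A}(S)$. Hence $\varphi(\epsilon e_i)=\epsilon f_i=0$, contradicting injectivity. Since $n$ nonempty blocks of an $n$-element set are singletons, each $|P_i|=1$.

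(iii) Surjectivity alone yields only a right inverse for $u_{12}$ and a left inverse for $u_{n-1,n}$, so you need bijectivity. Write $L_u,R_u$ for left and right multiplication by $u$. For $i<j<k$ you have $\theta_{ik}=L_{u_{ij}}\theta_{jk}$ and $\theta_{ik}=R_{u_{jk}}\theta_{ij}$. So $L_{u_{ij}}$ (for $j<n$) and $R_{u_{jk}}$ (for $j>1$) are bijections of $S$. If $L_u$ is bijective, then $uv=1$ for some $v$, and $u(vu-1)=0$ forces $vu=1$; the case of $R_u$ is symmetric. This covers every $u_{i,i+1}$ precisely because $n\ge3$.
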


For $R=\Phi$, the above theorems reduce to
descriptions obtained in \cite{Cao1,Liu_Tsai,Liu_Tsai_1},  respectively.

As far as we know, Theorems 1 and 2 are new
even in the case of automorphisms of $T(R)$, where $R$
is a commutative $\Phi$-algebra, since we do not
assume that the Lie or Jordan automorphism $\varphi$
is $R$-linear. 

In what follows, we always assume that
$R$ and $S$ are associative unital $\Phi$-algebras with identity
elements $1_R$ and $1_S$, respectively, and that $\Phi$ is a commutative
associative ring with identity $1$.

\section{Lie Isomorphisms}\label{Sec_Lie}

Let $\varphi : T(R)/(\Phi\cdot I_R) \to T(S)/(\Phi\cdot I_S)$ be an isomorphism
of Lie algebras. For an element $a\in T(R)$, denote
$\varphi(a)=\varphi(a+\Phi\cdot I_R).$

\begin{lemma}\label{Lm1}
There exists an idempotent $h\in\Phi$
(possibly, $0$ or $1$) such that
\[
\varphi(e_{11}(1_R))=
\left(
\begin{array}{ccccc}
	h & * & \cdots & \cdots & * \\
	0 & 0 & \ddots & \ddots & \vdots \\
	\vdots	 & \ddots & \ddots & \ddots & \vdots \\
	\vdots & \ddots & \ddots & 0 & * \\
	0 & \cdots & \cdots & 0 & h-1_S
\end{array}
\right)+\ \Phi\cdot I_S.
\] \end{lemma}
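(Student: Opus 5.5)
The plan is to read off the diagonal of $y:=\varphi(e_{11}(1_R))$, modulo $\Phi\cdot I_S$, from Lie-theoretic properties of $x:=e_{11}(1_R)$ that $\varphi$ must preserve.

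\emph{Step 1: $\varphi$ preserves the strictly upper part.} Since diagonal entries lie in $\Phi$, which is central, every commutator has zero diagonal. Using $[e_{ii}(1),e_{ij}(a)]=e_{ij}(a)$, we get $[T(R),T(R)]=\mathfrak{A}(R)$. Hence $\varphi$ maps $(\mathfrak{A}(R)+\Phi I_R)/\Phi I_R$ onto $(\mathfrak{A}(S)+\Phi I_S)/\Phi I_S$. Write $y=\operatorname{diag}(d_1,\dots,d_n)+u$ with $d_i\in\Phi$ and $u\in\mathfrak{A}(S)$. We may shift by a scalar, so only the differences $c_{ij}:=d_i-d_j$ matter.

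\emph{Step 2: the $c_{ij}$ are idempotents.} The element $x$ satisfies $\operatorname{ad}_x^2=\operatorname{ad}_x$ on $T(R)$. This identity is Lie-theoretic, so $\operatorname{ad}_y^2-\operatorname{ad}_y$ maps $T(S)$ into $\Phi I_S$. Its image also lies in $\mathfrak{A}(S)$, and $\Phi I_S\cap\mathfrak{A}(S)=0$, so $\operatorname{ad}_y^2=\operatorname{ad}_y$ exactly. Filter $\mathfrak{A}(S)$ by distance from the diagonal and apply $\operatorname{ad}_y$ to $e_{ij}(1_S)$. Modulo entries further from the diagonal, $[y,e_{ij}(s)]\equiv c_{ij}\,e_{ij}(s)$. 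Comparing lowest terms gives $c_{ij}^2=c_{ij}$ for all $i<j$.

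\emph{Step 3: orthogonality.} The image $\operatorname{ad}_x(T(R))$ is the first row $e_{12}(R)+\dots+e_{1n}(R)$, which is an abelian subalgebra. Therefore $\operatorname{ad}_y(T(S))$ is abelian modulo $\Phi I_S$, hence abelian. This image contains elements with leading terms $c_{ij}e_{ij}(1)$ and $c_{jk}e_{jk}(1)$. Their commutator has leading term $c_{ij}c_{jk}e_{ik}(1)$, so $c_{ij}c_{jk}=0$ for $i<j<k$. Combined with $c_{ik}=c_{ij}+c_{jk}$, this says the $c_{i,i+1}$ are pairwise orthogonal idempotents summing to $c_{1n}$.

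\emph{Step 4 (main obstacle): the drop occurs only at the ends.} It remains to show $c_{jk}=0$ for $2\le j<k\le n-1$, and then set $h:=c_{1n}$. Since the $c$'s are idempotents, I would work Zariski-locally, i.e.\ after multiplying by complementary idempotents of $\Phi$, so that each $c_{i,i+1}$ is $0$ or $1$. By Step 3, the shifted diagonal then has the form $(1,\dots,1,0,\dots,0)$ with a single drop at some position $k$. The case "no drop" ($y$ diagonal-scalar modulo $\mathfrak{A}$) is excluded because the induced map on $T/(\mathfrak{A}+\Phi I)\cong\operatorname{Diag}(\Phi)/\Phi I$ is injective. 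What is left is to exclude a drop at $1<k<n-1$. The first thing I would try is to distinguish $x$ by a Lie invariant not shared by the block idempotent $e_{11}+\dots+e_{kk}$. One candidate is that $\operatorname{ad}_x(T)$ is an abelian ideal $I$ whose centralizer contains all of $\mathfrak{A}$ beyond the first column,
\[
[I,\,[I,T]]=0 .
\]
This fails for the $k\times(n-k)$ block once $1<k<n-1$, via a commutator of the form $[e_{k,k+1},[e_{k,k+1},e_{kk}]]$ pattern; I would check this using the leading-term argument again.

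\emph{Fallback.} If that invariant is not sharp enough, I would apply the same analysis to all the $\varphi(e_{ii}(1_R))$ simultaneously. Their diagonal patterns must be jointly compatible with $\sum_i e_{ii}\equiv 0$ and with the injectivity of the induced map on $\operatorname{Diag}(\Phi)/\Phi I$. This should force the drops of $\varphi(e_{11}(1_R))$ to the extreme positions, yielding diagonal $(h,0,\dots,0,h-1_S)$ modulo $\Phi\cdot I_S$.
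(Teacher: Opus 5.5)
Your Steps 1--3 are correct and coincide with claims 1) and 3) of the paper. Getting $c_{1n}=1$ from the local decomposition and injectivity on $\operatorname{Diag}(\Phi)/\Phi\cdot I$ is a valid substitute for the paper's use of the centre $e_{1n}(R)$ of the derived algebra. Note, though, that the idempotent in the statement is then $h=c_{12}$, not $c_{1n}$ (which equals $1$).

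The gap is Step 4, which carries the actual content of the lemma. Your candidate invariant cannot separate the cases. For $E_k=e_{11}(1_R)+\dots+e_{kk}(1_R)$, the image $I_k=\operatorname{ad}_{E_k}(T(R))$ is the whole upper-right $k\times(n-k)$ block. This block is a square-zero associative ideal of $T(R)$, so $[I_k,[I_k,T(R)]]\subseteq[I_k,I_k]=0$ for every $k$; your test commutator is just $[e_{k,k+1},-e_{k,k+1}]=0$. Likewise, the centralizer of $I_k$ in $\mathfrak{A}(R)$ is $I_k$ itself for every $k$. The fallback is also insufficient as stated. Diagonal data only show that, locally, $\varphi$ sends each $E_i$ to an element with a single drop at some position $\sigma(i)$, where $\sigma$ is a permutation of $\{1,\dots,n-1\}$. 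The relation $\sum_i e_{ii}\equiv 0$ and injectivity on $\operatorname{Diag}(\Phi)/\Phi\cdot I$ hold for every permutation, so nothing yet forces $\sigma(1)\in\{1,n-1\}$.

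The missing ingredient has to use the strictly upper triangular part. The paper brings in $e_{22}(1_R)$.
\begin{itemize}
\item Since $[[T(R),e_{11}(1_R)],e_{22}(1_R)]=e_{12}(R)$ meets $[\mathfrak{A}(R),\mathfrak{A}(R)]$ trivially, the same holds for $[[T(S),\varphi(e_{11}(1_R))],\varphi(e_{22}(1_R))]$.
\item Suppose that on the component $h$ the drop were at $2\le k\le n-2$. For $i\le k<j$ with $j\ge i+2$, the element $[[e_{ij}(1_S),h\varphi(e_{11}(1_R))],h\varphi(e_{22}(1_R))]$ lies in $[\mathfrak{A}(S),\mathfrak{A}(S)]$ and has leading term $(\beta_i-\beta_j)e_{ij}(h)$.
\item These pairs $(i,j)$ connect all indices, so all diagonal entries $\beta_i$ of $h\varphi(e_{22}(1_R))$ coincide.
\item Then $\operatorname{ad}\varphi(e_{22}(h))$ is nilpotent, whereas $\operatorname{ad}e_{22}(h)$ is not, which is a contradiction.
\end{itemize}

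Alternatively, you can complete your fallback with an adjacency argument.
\begin{itemize}
\item $\varphi$ induces a map on $\mathfrak{A}/\mathfrak{A}^2$. It carries $e_{i,i+1}(R)$ modulo $\mathfrak{A}(R)^2$, which is the $1$-eigenspace of $\operatorname{ad}E_i$ there, onto $e_{\sigma(i),\sigma(i)+1}(S)$ modulo $\mathfrak{A}(S)^2$.
\item $[e_{i,i+1}(1),e_{j,j+1}(1)]\notin\mathfrak{A}^3$ exactly when $|i-j|=1$.
\item Hence $\sigma$ is an automorphism of the path graph on the vertices $1,\dots,n-1$, so it is the identity or the reversal, and $\sigma(1)\in\{1,n-1\}$.
\end{itemize}
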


\begin{proof} Let
\[
\varphi(e_{11}(1_R))=
\left(
\begin{array}{cccc}
	\alpha_1 & * & \cdots & * \\
	0 & \ddots & \ddots & \vdots \\
	\vdots & \ddots & \ddots & * \\
	0 & \cdots & 0 & \alpha_n
\end{array}
\right)+\ \Phi\cdot I_S; \qquad \alpha_1,\ldots,\alpha_n\in\Phi.
\]
Then the following assertions hold:

\medskip

1) For each $1\le i<j\le n$, the element $\alpha_i-\alpha_j$ is
	an idempotent. Indeed, for an arbitrary element
	$u\in T(R)$, we consider the adjoint operator
	\[
\operatorname{ad}(u)\colon T(R)\to T(R), \qquad x\mapsto [u,x].
	\]
	We have
	\[
	\operatorname{ad}(e_{11}(1_R))\,e_{ij}(a)=
	\begin{cases}
		e_{ij}(a), & i=1,\\
		0, & i\ne 1,
	\end{cases}
	\]
	where $1\le i<j\le n$ and $a\in R$. Hence,
	\[
	\operatorname{ad}(e_{11}(1_R))^2=\operatorname{ad}(e_{11}(1_R)).
	\]
	For $1\le i<j\le n$, denote
	\[
	T(S)^+_{ij}=\sum_{\substack{1\le p<q\le n,\\ q-p>j-i}} e_{pq}(S).
	\]
	We have
	\[
	\Bigl(\operatorname{ad}(\varphi(e_{11}(1_R)))^2-\operatorname{ad}(\varphi(e_{11}(1_R)))\Bigr)
	\bigl(e_{ij}(b)+\Phi\cdot I_S\bigr)\in
	\]
\[
\bigl((\alpha_i-\alpha_j)^2-(\alpha_i-\alpha_j)\bigr)e_{ij}(b)+T(S)^+_{ij}+\Phi\cdot I_S \quad \text{for} \quad b\in S.
\] It implies the claim.

\medskip

2) We claim that $\alpha_1-\alpha_n=1_S$.
Indeed, the center of the Lie algebra
\[
\big[T(R)/(\Phi\cdot I_R),\ T(R)/(\Phi\cdot I_R)\big]=\big(\mathfrak{A}(R)+\Phi\cdot I_R\big)/(\Phi\cdot I_R)
\]
is
\[
\big(e_{1n}(R)+\Phi\cdot I_R\big)/(\Phi\cdot I_R).
\]
Hence,
\[
\varphi\bigl((e_{1n}(R)+\Phi\cdot I_R)/(\Phi\cdot I_R)\bigr)
=
(e_{1n}(S)+\Phi\cdot I_S)/(\Phi\cdot I_S).
\]
For an arbitrary element
\[
x\in (e_{1n}(R)+\Phi\cdot I_R)/(\Phi\cdot I_R),
\]
we have $[e_{11}(1_R),x]=x.$ Therefore, for an arbitrary element
\[
y\in \bigl(e_{1n}(S)+\Phi\cdot I_S\bigr)/(\Phi\cdot I_S),
\]
we have $[\varphi(e_{11}(1_R)),y]=y.$ This implies assertion~2).

\medskip

3) We claim that for each $1 \le i < j < k \le n$, $(\alpha_i-\alpha_j)(\alpha_j-\alpha_k)=0.$ 
Indeed,
\[
[e_{11}(1_R),T(R)] =
\left(
\begin{array}{cccc}
	0 & R & \cdots & R \\
	0 & 0 & \cdots & 0 \\
	\vdots & \ddots & \ddots & 0 \\
	0 & \cdots & 0 & 0
\end{array}
\right).
\]
Hence,
\[
\Big[\big[\varphi(e_{11}(1_R)),T(S)\big],\big[\varphi(e_{11}(1_R)),T(S)\big]\Big]
\subseteq \Phi\cdot I_S.
\]
In particular,
\[
\Big[\big[\varphi(e_{11}(1_R)),e_{ij}(1_S)\big],\big[\varphi(e_{11}(1_R)),e_{jk}(1_S)\big]\Big]
\in \Phi\cdot I_S.
\]
The left-hand side lies in
\[
(\alpha_i-\alpha_j)(\alpha_j-\alpha_k)e_{ik}(1_S)+T_{ik}^{+}(S).
\]
This implies claim~3).

\medskip

4) Let $\alpha_1=\cdots=\alpha_k$ and $\alpha_k\ne \alpha_{k+1}$. Then either $k=1$
or $k=n-1$.

Clearly, $k\le n-1$, since the operator $\operatorname{ad}(\varphi(e_{11}(1_R)))$ is
not nilpotent. Suppose that $2\le k\le n-2$.
Since
\[
\varphi(e_{11}(1_R))+\Phi\cdot I_S=\varphi(e_{11}(1_R))-\alpha_{k+1}I_S+\Phi\cdot I_S,
\]
we may assume, without loss of generality, that
\[
\alpha_1=\cdots=\alpha_k\ne 0,\qquad \alpha_{k+1}=0.
\]
By claim 1), $h=\alpha_1$
is an idempotent.

For any $k+2\le j\le n$, claim 3) implies $(h-0)(0-\alpha_j)=0,$ hence $\alpha_{k+2},\ldots,\alpha_n\in(1-h)\Phi$.
Now,
\[h\, \varphi(e_{11}(1_R))=
\begin{tikzcd}[column sep=-0.5em, row sep=0pt, ampersand replacement=\&]
	\left(
	\begin{array}{cccccc}
		h      & \cdots & *     & *      & \cdots & * \\
	0      & \ddots & \vdots & \vdots &        & \vdots \\
		\vdots      & \cdots & h      & *      & \cdots & * \\
	0      & \cdots & 0      & 0      & \cdots & * \\
		\vdots &        & \vdots & \vdots & \ddots & \vdots \\
	0     & \cdots & 0     & 0      & \cdots & 0
	\end{array}
	\right)
	\hspace{-0.6em}
	\begin{array}{c}
		\left.\vphantom{
			\begin{array}{c}
				h\\ h\\ h
			\end{array}
		}\right\} {\scriptstyle k} 
		\\[+12.25ex]
	\end{array}
	\& {}
\end{tikzcd}
+\ \Phi\cdot I_S.
\]
Let
\[
h\,\varphi(e_{22}(1_R))=
\left(
\begin{array}{cccc}
	\beta_1 & * & \cdots & * \\
	0 & \ddots & \ddots & \vdots \\
	\vdots & \ddots & \ddots & * \\
	0 & \cdots & 0 & \beta_n
\end{array}
\right)+\ \Phi\cdot I_S; \quad \beta_i\in h\Phi,\quad 1\le i\le n.
\]
We have:
\[
\big[[T(R),e_{11}(1_R)],e_{22}(1_R)\big]=e_{12}(R), \quad 
e_{12}(R)\cap [\mathfrak{A}(R),\mathfrak{A}(R)]=(0).
\]
This implies that
\[
\big[[T(S),h\, \varphi(e_{11}(1_R))],\, h\, \varphi(e_{22}(1_R))\big]
\cap\ [\mathfrak{A}(S),\mathfrak{A}(S)]=(0).
\]

For all  $1\le i\le k$ and $i+2\le j\le n$, we have
\[
\left[ \ 
\left[
e_{ij}(1_S),
\begin{tikzcd}[column sep=-0.5em, row sep=0pt, ampersand replacement=\&]
	\left(
	\begin{array}{cccccc}
		h      & \cdots & *     & *      & \cdots & * \\
		0      & \ddots & \vdots & \vdots &        & \vdots \\
		\vdots      & \cdots & h      & *      & \cdots & * \\
		0      & \cdots & 0      & 0      & \cdots & * \\
		\vdots &        & \vdots & \vdots & \ddots & \vdots \\
		0     & \cdots & 0     & 0      & \cdots & 0
	\end{array}
	\right)
\end{tikzcd} 
\right] , \left(
\begin{array}{cccc}
	\beta_1 & * & \cdots & * \\
	0 & \ddots & \ddots & \vdots \\
	\vdots & \ddots & \ddots & * \\
	0 & \cdots & 0 & \beta_n
\end{array}
\right)
\right] \in
\]
\[
 (\beta_i-\beta_j)e_{ij}(h)+T(S)^+_{ij}.
\]  Hence, $\beta_i-\beta_j=0$.

Since $k\le n-2$, it follows that $\beta_1=\cdots=\beta_k=\beta_{k+2}=\cdots=\beta_n.$ Also, since $k\ge 2$, it follows that $\beta_1=\beta_{k+1}.$

We have proved that $\beta_1=\cdots=\beta_n$, which implies
that the operator $\operatorname{ad}\big(\varphi(e_{22}(h))\big)$ is nilpotent.
Hence, the operator $\operatorname{ad}(e_{22}(h))$ is also nilpotent,
a contradiction.

\medskip

5) Suppose now that $\alpha_n=\alpha_{n-1}=\cdots=\alpha_{n-q+1},$ $\alpha_{n-q}\ne \alpha_{n-q+1}.$ We claim that either $q=1$ or $q=n-1$.
To prove this, consider the composition
\[
T(R)/(\Phi\cdot I_R) \xrightarrow{\ \varphi\ } T(S)/(\Phi\cdot I_S)
\xrightarrow{\ \tau_S\ } T(S^{\mathrm{op}})/(\Phi\cdot I_{S^{\mathrm{op}}}),\quad 
\pi=\tau_S\varphi.
\]
Then
\[
\pi(e_{11}(1_R))=
\left(
\begin{array}{cccc}
	\alpha_n & * & \cdots & * \\
	0 & \ddots & \ddots & \vdots \\
	\vdots & \ddots & \ddots & * \\
	0 & \cdots & 0 & \alpha_1
\end{array}
\right).
\]
Thus,   claim 5) follows from  claim 4).

\medskip

6) Suppose that $k=n-1$. Then
\[
\left(
\begin{array}{cccc}
	\alpha_1 & * & \cdots & * \\
	0 & \ddots & \ddots & \vdots \\
	\vdots & \ddots & \ddots & * \\
	0 & \cdots & 0 & \alpha_n
\end{array}
\right)+\Phi\cdot I_S
=
\left(
\begin{array}{cccc}
	0 & * & \cdots & * \\
	0 & \ddots & \ddots & \vdots \\
	\vdots & \ddots & 0 & * \\
	0 & \cdots & 0 & \alpha_n-\alpha_1
\end{array}
\right)+\Phi\cdot I_S.
\] By  claim 2), $\alpha_n-\alpha_1=-1_S.$

\medskip

7) Similarly, if $q=n-1$, then
\[
\left(
\begin{array}{cccc}
	\alpha_1 & * & \cdots & * \\
	0 & \ddots & \ddots & \vdots \\
	\vdots & \ddots & \ddots & * \\
	0 & \cdots & 0 & \alpha_n
\end{array}
\right)+\Phi\cdot I_S
=
\left(
\begin{array}{cccc}
	\alpha_1-\alpha_n & * & \cdots & * \\
	0 & 0 & \ddots & \vdots \\
	\vdots & \ddots & \ddots & * \\
	0 & \cdots & 0 & 0
\end{array}
\right)+\Phi\cdot I_S
=\] 
\[\left(
\begin{array}{cccc}
	1_S & * & \cdots & * \\
	0 & 0 & \ddots & \vdots \\
	\vdots & \ddots & \ddots & * \\
	0 & \cdots & 0 & 0
\end{array}
\right)+\Phi\cdot I_S.
\]

 \medskip
 
 8) Suppose that $k=q=1$. In particular, $\alpha_1\ne \alpha_2$.
We have
\[
\left(
\begin{array}{cccc}
	\alpha_1 & * & \cdots & * \\
	0 & \ddots & \ddots & \vdots \\
	\vdots & \ddots & \ddots & * \\
	0 & \cdots & 0 & \alpha_n
\end{array}
\right)+\Phi\cdot I_S
=\]
\[\left(
\begin{array}{cccc}
	\alpha_1-\alpha_2 & * & \cdots & * \\
		0 & 0 & * & * \\
	0 & 0 & 	\alpha_3-\alpha_2  & \vdots \\
	\vdots &  \ \ \ \ \ddots  & \ \ \ \ \ddots & * \\
	0 & \ \ \ \cdots & 0 & \alpha_n-\alpha_2
\end{array}
\right)+\Phi\cdot I_S.
\]
By claim 1), $h=\alpha_1-\alpha_2$ is an idempotent, and by claim 2), $\alpha_n-\alpha_2=h-1_S.$ As shown in claim 4), $\alpha_i-\alpha_2\in (1-h)\Phi$  for $ 3\le i\le n.$ Moreover, $\alpha_{n-1}\ne \alpha_n$.

Consider the ideals
\[
I' = hT(R)+(\alpha_{n-1}-\alpha_n)T(R) \quad \text{and} \quad 
I'' = hT(S)+(\alpha_{n-1}-\alpha_n)T(S).
\]
Clearly,
\[
\varphi\bigl((I'+\Phi\cdot I_R)/(\Phi\cdot I_R)\bigr)
=
(I''+\Phi\cdot I_S)/(\Phi\cdot I_S).
\]
Hence,
\[
\overline{\varphi}:T(R)/(I'+\Phi\cdot I_R )\to T(S)/(I''+\Phi\cdot I_S)
\]
is a Lie isomorphism. We have
\[
\overline{\varphi}(e_{11}(1_R))=
\left(
\begin{array}{cccc}
	0 & * & \cdots & * \\
	\vdots & \ddots & \ddots & \vdots \\
		0 & \cdots & \alpha & * \\
	0 & \cdots & 0 & \alpha
\end{array}
\right)
+\overline{\varphi}(\Phi)\cdot 1_{\,S/I''}.
\]
By claim  4), $\overline{\varphi}(e_{11}(1_R))=0,$ which implies that $$\alpha_{n-1}-\alpha_n=(\alpha_{n-1}-\alpha_2)-(\alpha_n-\alpha_2)=1-h.$$

On the other hand, by claim 2), $\alpha_2-\alpha_n=1-h.$ Hence, $\alpha_{n-1}-\alpha_2=0.$ If $\alpha_i-\alpha_2\ne 0$ for $ 3\le i\le n-2,$ then $ (\alpha_i-\alpha_2-0)(0-(\alpha_n-\alpha_2))\ne 0,$ which contradicts  claim 3).

Now,
\[
\varphi(e_{11}(1_R))=
\left(
\begin{array}{cccc}
	\alpha_1-\alpha_2 & * & \cdots & * \\
	0 & 0 & \ddots & \vdots \\
	\vdots & \ddots & 0 & * \\
	0 & \cdots & 0 & \alpha_n-\alpha_2
\end{array}
\right)+\Phi\cdot I_S
=\]
\[\left(
\begin{array}{cccc}
	h & * & \cdots & * \\
	0 & 0 & \ddots & \vdots \\
	\vdots & \ddots & 0 & * \\
	0 & \cdots & 0 & h-1
\end{array}
\right)+\Phi\cdot I_S.
\]
This completes the proof of the lemma. \end{proof}

Let $h \in \Phi$ be an idempotent. Consider the ideal
\[
hT(R)=
\left(
\begin{array}{cccc}
	h\Phi & hR & \cdots & hR \\
	0 & \ddots & \ddots & \vdots \\
	\vdots & \ddots & h\Phi & hR \\
	0 & \cdots & 0 & h\Phi
\end{array}
\right)
\]
of the algebra $T(R)$. Let
\[
\varphi_h \colon
hT(R)/ (h\Phi\cdot I_R)
\longrightarrow
hT(S)/ (h\Phi \cdot I_S)
\]
be the restriction of the Lie isomorphism $\varphi$.

\begin{lemma}\label{Lm2}
Suppose that
\[
\varphi\bigl(e_{11}(h)\bigr)=
\left(
\begin{array}{cccc}
	h & * & \cdots & * \\
	0 & 0 & \ddots & \vdots \\
	\vdots & \ddots & \ddots & * \\
	0 & \cdots & 0 & 0
\end{array}
\right)
+ h\Phi\cdot I_S.
\]
Then there exist an isomorphism $\psi \colon hR \to hS$ and invertible elements $g_1 \in UT(h,S) $ and $ g_2 \in \operatorname{Diag}(S^*),$ such that $\varphi_h$ lifts to $$\hat{g}_1\,  \hat{g}_2 \, \tilde{\psi}.$$ \end{lemma}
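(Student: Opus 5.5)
Since $h$ is a central idempotent, $hT(R)\cong T(hR)$ over the ring $h\Phi$. So I would work entirely inside $hT(S)$ and, to keep notation light, treat $h$ as the identity of the ground ring. The hypothesis says $\varphi(e_{11}(h))$ has diagonal $(h,0,\dots,0)$ modulo scalars. The plan is to conjugate $\varphi$ step by step until it becomes entrywise.

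\textbf{Step 1: normalize $\varphi(e_{11})$.} The operator $\operatorname{ad}\varphi(e_{11})$ is idempotent with the same diagonal as $e_{11}$. Hence its image decomposes $hT(S)$ (mod scalars) into the eigenspaces $0$ and $1$. A standard triangular lifting argument, column by column, produces $g\in UT(n,S)$ with $\widehat g\varphi(e_{11})=e_{11}(h)$ mod $h\Phi I_S$. One solves for the entries of $g$ inductively along superdiagonals, using that the $(1,j)$ entries can be absorbed by $e_{1j}$-conjugations.

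\textbf{Step 2: the remaining diagonal idempotents.} I would proceed inductively. Let $\varepsilon_k=e_{11}+\dots+e_{kk}$. The image $\varphi(\varepsilon_k)$ is again ad-idempotent. Since $[\varepsilon_k,T(R)]=\sum_{i\le k<j}e_{ij}(R)$ is an abelian subalgebra, the arguments of Lemma~\ref{Lm1}, parts 1)--4), applied to $\varepsilon_k$ show that the diagonal of $\varphi(\varepsilon_k)$ is $(1,\dots,1,0,\dots,0)$ with $k$ ones. Here the hypothesis on $e_{11}(h)$ fixes the orientation and rules out the "anti" alternative, because commuting elements must have compatible diagonals. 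A further unitriangular conjugation then makes all $\varphi(\varepsilon_k)=\varepsilon_k$, hence $\varphi(e_{kk})=e_{kk}$ mod scalars. The main obstacle is this step: making the successive conjugations compatible. I would handle it by choosing each new conjugating element inside the centralizer of the already-normalized $\varepsilon_1,\dots,\varepsilon_{k-1}$, that is, block-unitriangular.

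\textbf{Step 3: the off-diagonal part.} Once the $e_{kk}$ are fixed, $\varphi$ preserves the Peirce spaces $e_{ij}(R)=[e_{ii},[e_{jj},\cdot\,]]$-images. This yields additive bijections $\varphi_{ij}:hR\to hS$ with $\varphi(e_{ij}(a))=e_{ij}(\varphi_{ij}(a))$. From $[e_{ij}(a),e_{jk}(b)]=e_{ik}(ab)$ we get
\[
\varphi_{ik}(ab)=\varphi_{ij}(a)\varphi_{jk}(b),\qquad i<j<k .
\]
Setting $u_{ij}=\varphi_{ij}(1)$, the case $a=b=1$ gives $u_{ik}=u_{ij}u_{jk}$. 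Taking $i=1$ and $k=n$ shows that each $u_{ij}$ has one-sided inverses, so all $u_{ij}$ are units; this uses $n\ge3$. Conjugating by the diagonal matrix $g_2=\operatorname{diag}(1,u_{12}^{-1},\dots)$, chosen suitably, normalizes $u_{i,i+1}=1$, and then $u_{ij}=1$ for all $i<j$. Now $\varphi_{ij}(a)=\varphi_{ik}(a)$ for all indices, by the relations with $b=1$ or $a=1$. So there is a single map $\psi$, and it satisfies $\psi(ab)=\psi(a)\psi(b)$, again because $n\ge3$ allows a middle index. Finally, diagonal elements map correctly modulo scalars, so $\varphi_h$ lifts to $\widehat g_1\widehat g_2\widetilde\psi$.
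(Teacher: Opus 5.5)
There is a genuine gap in your Step~2, at the point that carries the weight of the lemma: determining the diagonal of $\varphi(\varepsilon_k)$.

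\textbf{What parts 1)--3) of Lemma~\ref{Lm1} give.} These arguments do transfer to $\varepsilon_k$: its ad is idempotent, $[\varepsilon_k,T(R)]$ is abelian, and it contains $e_{1n}(R)$. But they only show three things:
\begin{itemize}
\item each difference $\alpha_i-\alpha_j$ is idempotent;
\item $(\alpha_i-\alpha_j)(\alpha_j-\alpha_l)=0$;
\item $\alpha_1-\alpha_n=1$.
\end{itemize}
For $\Phi$ without idempotents, this means the diagonal is $(1^m,0^{n-m})$ modulo scalars, with $m\in\{1,\dots,n-1\}$ left completely undetermined.

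\textbf{Part 4) does not transfer.} It is not a consequence of ad-idempotency or abelianness. Its proof uses the specific fact that $[[T(R),e_{11}(1_R)],e_{22}(1_R)]=e_{12}(R)$ meets $[\mathfrak{A}(R),\mathfrak{A}(R)]$ trivially. Its conclusion (the initial run of equal diagonal entries has length $1$ or $n-1$) concerns $e_{11}$ only. Nothing in the paper or in your proposal yields an analogue "$m\in\{k,n-k\}$" for $\varepsilon_k$. Counting the $k(n-k)$ positions of $[\varepsilon_k,T(R)]$ is also unavailable, since $\varphi$ is only $\Phi$-linear and $R,S$ are arbitrary.

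\textbf{The orientation argument fails as well.} For upper triangular $z$ we have $[e_{11}(h),z]=\sum_{j\ge 2}e_{1j}(hz_{1j})$. So commuting with $e_{11}(h)$ only kills the first row of $z$ and says nothing about its diagonal. For example, with $n=5$, both $\varepsilon_2$ and $\varepsilon_3$ commute with $e_{11}$, are ad-idempotent, and satisfy 1)--3). Your argument cannot tell which of these diagonals $\varphi(\varepsilon_2)$ has.

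\textbf{The missing idea.} Since $[T,T]=\mathfrak{A}$ and $\mathfrak{A}\cap\Phi I=0$, the map $\varphi$ induces a Lie isomorphism $\mathfrak{A}(R)\to\mathfrak{A}(S)$. Hence it preserves the lower central series $\mathfrak{A}^t$, the matrices supported on $j-i\ge t$. The element $[[T(R),e_{11}(h)],e_{t+1,t+1}(h)]=e_{1,t+1}(hR)$ lies in $\mathfrak{A}(R)^t$ and meets $\mathfrak{A}(R)^{t+1}$ trivially. So the same holds for $X=[[T(S),\varphi(e_{11}(h))],\varphi(e_{t+1,t+1}(h))]$.

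Write $\gamma_1,\dots,\gamma_n$ for the diagonal of $\varphi(e_{t+1,t+1}(h))$. Once $\varphi(e_{11}(h))$ is normalized, $[[e_{1j}(h),\varphi(e_{11}(h))],\varphi(e_{t+1,t+1}(h))]$ equals $\pm(\gamma_j-\gamma_1)e_{1j}(h)$ modulo $T^+_{1j}(S)$.
\begin{itemize}
\item For $j\le t$, membership in $\mathfrak{A}^t$ forces $\gamma_j=\gamma_1$.
\item For $j\ge t+2$, the element lies in $X\cap\mathfrak{A}^{t+1}=0$, so again $\gamma_j=\gamma_1$.
\end{itemize}
This pins down the diagonal uniformly over $\Phi$, with no splitting into connected components (which need not exist). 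Only after that do two further steps complete the induction:
\begin{itemize}
\item ad-idempotency on the centralizer of $\varphi(e_{11}(h)),\dots,\varphi(e_{tt}(h))$, which gives $\gamma_{t+1}-\gamma_1=h$;
\item your block-unitriangular conjugation.
\end{itemize}

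Your Step~1 is fine: after clearing the first row, ad-idempotency plus nilpotency kills the lower block. Step~3 matches the paper. One small point there: surjectivity of the $\varphi_{ik}$ only gives one-sided inverses of the $u_{ij}$, and you need their injectivity as well to get units.
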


We say that Lie isomorphisms
\[
\varphi_1,\varphi_2:\ hT(R)/(h\Phi\cdot I_R) \to hT(S)/(h\Phi\cdot I_S)
\]
are \textbf{equivalent} if there exists $g\in UT(n,S)\cdot \operatorname{Diag}(S^{*})$ such that ${\varphi_2=\widehat{g}\,\varphi_1.}$ 

\begin{proof} Let
\[
\varphi(e_{11}(h))=
\left(
\begin{array}{cccc}
	h & a_{12} & \cdots & a_{1n} \\
	0 & 0 & \ddots & \vdots \\
	\vdots & \ddots & \ddots & * \\
	0 & \cdots & 0 & 0
\end{array}
\right)+h\Phi\cdot I_S,
\]
where $a_{12},\ldots,a_{1n}\in hS.$ Let
\[
x=
\left(
\begin{array}{cccc}
	1_S & a_{12} & \cdots & a_{1n} \\
	0 & 1_S & 0 & 0 \\
	\vdots & \ddots & \ddots & 0 \\
	0 & \cdots & 0 & 1_S
\end{array}
\right).
\]
Then
\[
x\,\varphi(e_{11}(h))\,x^{-1}=
\left(
\begin{array}{cccc}
	h & 0 & 0 \ \  \cdots & 0 \\
	0 & 0 & * \ \ \cdots & * \\
	\vdots & \ddots & \ddots & * \\
	0 & \cdots & 0 & 0
\end{array}
\right)+h\Phi\cdot I_S.
\]
Hence, up to equivalence, we may assume that
\begin{equation}\label{EQ1}
\varphi(e_{11}(h))=
\left(
\begin{array}{ccccc}
	h & 0 &  0 &\cdots & 0 \\
	0 & 0 & * & \cdots & * \\
	\vdots & \ddots & \ddots & \ddots & \vdots \\
		0 & 0 & \ddots & 0 & * \\
	0 & 0& \cdots & 0 & 0
\end{array}
\right)+h\Phi\cdot I_S.\end{equation}
Suppose that $1 \le t \le n-1$ and that condition $P(t)$ holds, namely,
\[
\varphi(e_{ii}(h))=
\begin{tikzcd}[column sep=-0.5em, row sep=0pt, ampersand replacement=\&]
	\left(
	\begin{array}{c|c}
		e_{ii}(h) & 0 \\
		\hline
		0 &
		\begin{array}{ccc}
			0 & \cdots & * \\
			\vdots & \ddots & \vdots \\
			0 & \cdots & 0
		\end{array}
	\end{array}
	\right)
	\hspace{-0.6em}
	\begin{array}{c}
		\Bigl.\vphantom{
			\begin{array}{c}
				h\\ h\\ h
			\end{array}
		}\bigr\} {\scriptstyle t} 
		\\[+12.0ex]
	\end{array}
	\& {}
\end{tikzcd}
+h\Phi\cdot I_S
\]
for $i=1,2,\ldots,t$.

As shown above, condition $P(1)$ holds. We will show that $\varphi$ is equivalent to a Lie isomorphism satisfying $P(t+1)$.

Let
\[
\varphi(e_{t+1,t+1}(h))=
\left(
\begin{array}{cccc}
	\gamma_1 & * & \cdots & * \\
	0 & \ddots & \ddots & \vdots \\
	\vdots & \ddots & \ddots & * \\
	0 & \cdots & 0 & \gamma_n
\end{array}
\right)+h\Phi\cdot I_S.
\]
We have
\[
\big[\, [T(R),e_{11}(h)],e_{t+1,t+1}(h)\, \big]=e_{1,t+1}(hR),
\]
\[
e_{1,t+1}(hR)\subseteq \mathfrak{A}(R)^t,\qquad e_{1,t+1}(hR)\cap \mathfrak{A}(R)^{t+1}=(0).
\]
This implies that
\[
X=\big[\, [T(S),\varphi(e_{11}(h))],\varphi(e_{t+1,t+1}(h))\, \big]\subseteq \mathfrak{A}(S)^t, \quad X\cap \mathfrak{A}(S)^{t+1}=(0).
\]

For $2\le j\le n$, we have
\[
[\, e_{1j}(h),\varphi(e_{11}(h))\, ]\in -e_{1j}(h)+T^+_{1j}(S),
\]
\[
[\, e_{1j}(h),\varphi(e_{t+1,t+1}(h))\, ]\in (\gamma_j-\gamma_1)e_{1j}(h)+T^+_{1j}(S).
\]
If $j \le t$ and $\gamma_j-\gamma_1 \ne 0$, then $X \not\subseteq \mathfrak{A}(S)^t$.
If $j \ge t+2$, then $X \subseteq \mathfrak{A}(S)^{t+1}$.
Hence, for every $j$ such that $1 \le j \le n$ and $j \ne t+1$, we have $\gamma_j=\gamma_1.$ Therefore,
\[
y=\varphi\big(e_{t+1,t+1}(h)\big)=\]
\[\left(
\begin{array}{ccccccc}
	0 & * & * &  \cdots & & \cdots & * \\
	0 & 0 & * & \cdots & & \cdots & * \\
		\vdots & \ddots & \ddots & \ddots & & \cdots & *\\
	0 & 0  &  \cdots &  \gamma_{t+1}\!-\!\gamma_1 & \ddots & \cdots & * \\
		0 & 0 & \ddots & \ddots & 0 & \ddots & *\\
		\vdots & \vdots &   & \ddots & \ddots & \ddots & *\\
	0 & 0 & \cdots & \cdots & \cdots &  & 0
\end{array}
\right)\!{\scriptstyle (t+1)\text{-th row}} \ 
+\, h\Phi\cdot I_S.
\]
Denote $\gamma_{t+1}-\gamma_1=\gamma.$

Let $C(hR)$ denote the centralizer of $e_{ii}(h)$, where $1 \le i \le t$, in $h\mathfrak{A}(R)$. It is easy to see that
\[
C(hR)=
\begin{tikzcd}[column sep=-0.5em, row sep=0pt, ampersand replacement=\&]
	\left(
	\begin{array}{c|c}
	0& 0 \\
	\hline
		0 & *
	\end{array}
	\right)
	\hspace{-0.6em}
	\begin{array}{c}
		\Bigl.\vphantom{
			\begin{array}{c}
				h\\ h\\ h
			\end{array}
		}\bigr\} {\scriptstyle t} 
		\\[+5.0ex]
	\end{array}
	\& {}
\end{tikzcd}.\]
Let us show that the centralizer of $t$ elements $\varphi(e_{ii}(h))$, where $1 \le i \le t$, is
\[
C(hS)=
\begin{tikzcd}[column sep=-0.5em, row sep=0pt, ampersand replacement=\&]
	\left(
	\begin{array}{c|c}
		0& 0 \\
		\hline
		0 & *
	\end{array}
	\right)
	\hspace{-0.6em}
	\begin{array}{c}
		\Bigl.\vphantom{
			\begin{array}{c}
				h\\ h\\ h
			\end{array}
		}\bigr\} {\scriptstyle t} 
		\\[+5.0ex]
	\end{array}
	\& {}
\end{tikzcd}.
\]

Let
\[
\sum_{i=1}^{t}\varphi(e_{ii}(h))=
\left(
\begin{array}{c|c}
	I_t(h) & 0 \\
	\hline
	0 & A
\end{array}
\right),
\]
where
\[
I_t(h)=
\left(
\begin{array}{ccc}
	h & \cdots & 0 \\
	\vdots & \ddots & \vdots \\
	0 & \cdots & h
\end{array}
\right)
\hspace{-0.6em}
\begin{array}{c}
	\left.\vphantom{
\begin{array}{ccc}
	h & \cdots & 0 \\
	\vdots & \ddots & \vdots \\
	0 & \cdots & h
\end{array}
	}\right\}
	\\[-3.2ex] \ 
\end{array} \!\!{\scriptstyle t} 
\]
and $A$ is an $(n-t)\times(n-t)$ upper triangular matrix with zero diagonal entries over $hS$.

Let
\[
\left(
\begin{array}{c|c}
	a & b \\
	\hline
	0 & d
\end{array}
\right)\in \mathfrak{A}(hS)
\]
be an element of the centralizer of
\[
\left(
\begin{array}{c|c}
	I_t(h) & 0 \\
	\hline
	0 & A
\end{array}
\right),
\]
that is,
\[
\left(
\begin{array}{c|c}
	I_t(h) & 0 \\
	\hline
	0 & A
\end{array}
\right)
\left(
\begin{array}{c|c}
	a & b \\
	\hline
	0 & d
\end{array}
\right)
=
\left(
\begin{array}{c|c}
	a & b \\
	\hline
	0 & Ad
\end{array}
\right),
\]
\[
\left(
\begin{array}{c|c}
	a & b \\
	\hline
	0 & d
\end{array}
\right)
\left(
\begin{array}{c|c}
	I_t(h) & 0 \\
	\hline
	0 & A
\end{array}
\right)
=\left(
\begin{array}{c|c}
	a & b A \\
	\hline
	0 & dA
\end{array}
\right) \]
\[\text{and} \quad
\left(
\begin{array}{c|c}
	a & b \\
	\hline
	0 & Ad
\end{array}
\right)=
\left(
\begin{array}{c|c}
	a & b A \\
	\hline
	0 & dA
\end{array}
\right). \quad \text{Hence,} \quad b=b A.
\]
Since the matrix $A$ is nilpotent, we conclude
that $b=0$. The matrix $a$ commutes with $e_{ii}(h)$ for $1\le i\le t$; hence, $a$ is diagonal. Now,
\[
\left(
\begin{array}{c|c}
	a & b \\
	\hline
	0 & d
\end{array}
\right)\in \mathfrak{A}(S)
\]
implies that $a=0$. 

Thus, we have proved that
\[
\varphi\bigl(\, (C(hR)+h\Phi\cdot I_R)/(h\Phi\cdot I_R) \, \bigr)
=
\big( C(hS)+h\Phi\cdot I_S\big)/(h\Phi\cdot I_S),
\]
and therefore
\[y=\varphi\big(\,e_{t+1,t+1}(h)\,\big)=\begin{tikzcd}[column sep=-0.5em, row sep=0pt, ampersand replacement=\&]
\left(
\begin{array}{c|c}
	0 & 0 \\
	\hline
	0 &
	\begin{array}{cccc}
		\gamma & * & \cdots & * \\
		0 & 0 & \ddots & \vdots \\
		\vdots & \ddots & \ddots & * \\
		0 & \cdots & 0 & 0
	\end{array}
\end{array}
\right)
	\hspace{-0.6em}
	\begin{array}{c}
		\Bigl.\vphantom{
			\begin{array}{c}
				h\\ h\\ h
			\end{array}
		}\bigr\} {\scriptstyle t} 
		\\[+17.3ex]
	\end{array}
	\& {}
\end{tikzcd}
+\, h\Phi\cdot I_S.
\]

As in the proof of Lemma~\ref{Lm1}, we observe that
\[
\operatorname{ad}\big(e_{t+1,t+1}(h)\big)\, \bigl(\operatorname{ad}(e_{t+1,t+1}(h))-1\bigr)\,C(hR)=(0).
\]
This implies that
\[
\operatorname{ad}(y)\bigl(\operatorname{ad}(y)-1\bigr)\,C(hS)=(0), \quad \gamma=h,
\]
and
\[
y=
\left(
\begin{array}{c|c}
	0 & 0 \\
	\hline
	0 & d
\end{array}
\right),
\ \ 
d=
\left(
\begin{array}{ccccc}
	h & a_{12} & & \cdots & a_{1,n-t-1} \\
	0 & 0 & * & \cdots& * \\
	\vdots & \ddots &\ddots &  \ddots & \vdots \\
	\vdots	 &  & \ddots & \ddots & * \\
	0 & \cdots & & 0 & 0
\end{array}
\right)
\hspace{-0.6em}
\begin{array}{c}
	\left.\vphantom{
	\begin{array}{ccccc}
		h & a_{12} & & \cdots & a_{1,n-t-1} \\
		0 & 0 & * & \cdots& * \\
		\vdots & \ddots &\ddots &  \ddots & \vdots \\
		\vdots	 &  & \ddots & \ddots & * \\
		0 & \cdots & & 0 & 0
	\end{array}
	}\right\}
	\\[-3.2ex] \ 
\end{array} \!\!{\scriptstyle n-t}  \, ,
\quad a_{ij}\in hS.
\]

Let
\[
x=
\left(
\begin{array}{ccccc}
	1_S & a_{12} & & \cdots & a_{1,n-t-1} \\
		0 & 1 & 0 & \cdots& 0 \\
	\vdots & \ddots &\ddots &  \ddots & \vdots \\
\vdots	 &  & \ddots & \ddots & 0 \\
	0 & \cdots & & 0 & 1
\end{array}
\right).
\]
Then
\[
x d x^{-1}=
\left(
\begin{array}{ccccc}
	h & 0 & 0 & \cdots & 0 \\
	0 & 0 & * & \cdots& * \\
	\vdots & \ddots &\ddots &  \ddots & \vdots \\
	\vdots	 &  & \ddots & \ddots & * \\
	0 & \cdots & & 0 & 0
\end{array}
\right)
\hspace{-0.6em}
\begin{array}{c}
	\left.\vphantom{
		\begin{array}{ccccc}
			h & 0 & 0 & \cdots & 0 \\
			0 & 0 & * & \cdots& * \\
			\vdots & \ddots &\ddots &  \ddots & \vdots \\
			\vdots	 &  & \ddots & \ddots & * \\
			0 & \cdots & & 0 & 0
		\end{array}
	}\right\}
	\\[-3.2ex] \ 
\end{array} \!\!{\scriptstyle n-t} \, ,
\]
and therefore
\[
\left(
\begin{array}{c|c}
	I_t & 0 \\
	\hline
	0 & x
\end{array}
\right)
y
\left(
\begin{array}{c|c}
	I_t & 0 \\
	\hline
	0 & x
\end{array}
\right)^{-1}
=\begin{tikzcd}[column sep=-0.5em, row sep=0pt, ampersand replacement=\&]
	\left(
	\begin{array}{c|c}
			e_{t+1,t+1}(h) & 0 \\
		\hline
		0 & B
	\end{array}
	\right)
	\hspace{-0.6em}
	\begin{array}{c}
		\Bigl.\vphantom{
			\begin{array}{c}
				h\\ h\\ h
			\end{array}
		}\bigr\} {\scriptstyle t+1} 
		\\[+5.0ex]
	\end{array}.
	\& {}
\end{tikzcd}
\]
We observe that
\[
\left(
\begin{array}{c|c}
	I_t & 0 \\
	\hline
	0 & x
\end{array}
\right)
\left(
\begin{array}{c|c}
	e_{ii}(h) & 0 \\
	\hline
	0 & *
\end{array}
\right)
\left(
\begin{array}{c|c}
	I_t & 0 \\
	\hline
	0 & x
\end{array}
\right)^{-1}
=
\left(
\begin{array}{c|c}
	e_{ii}(h) & 0 \\
	\hline
	0 & *
\end{array}
\right),
\qquad 1\le i\le t,
\]
as before.

Now, up to equivalence, we may assume that
\[
\varphi(e_{ii}(h))=\begin{tikzcd}[column sep=-0.5em, row sep=0pt, ampersand replacement=\&]
	\left(
	\begin{array}{c|c}
		e_{ii}(h) & 0 \\
		\hline
		0 & *
	\end{array}
	\right)
	\hspace{-0.6em}
	\begin{array}{c}
		\Bigl.\vphantom{
			\begin{array}{c}
				h\\ h\\ h
			\end{array}
		}\bigr\} {\scriptstyle t} 
		\\[+5.0ex]
	\end{array},
	\&  {}
\end{tikzcd}
\qquad 1\le i\le t,
\]
and
\[
\varphi(e_{t+1,t+1}(h))=
\begin{tikzcd}[column sep=-0.5em, row sep=0pt, ampersand replacement=\&]
	\left(
	\begin{array}{c|c}
		e_{t+1,t+1}(h) & 0 \\
		\hline
		0 & *
	\end{array}
	\right)
	\hspace{-0.6em}
	\begin{array}{c}
		\Bigl.\vphantom{
			\begin{array}{c}
				h\\ h\\ h
			\end{array}
		}\bigr\} {\scriptstyle t+1} 
		\\[+5.0ex]
	\end{array}.
	\&  {}
\end{tikzcd}
\]

Let $1\le i\le t $ and 
\[
\varphi(e_{ii}(a))\!=\!\!\!
\begin{tikzcd}[column sep=-0.5em, row sep=0pt, ampersand replacement=\&]
	\left(
	\begin{array}{c|c}
		e_{i i}(h) & a \\
		\hline
		0 & d
	\end{array}
	\right)
	\hspace{-0.6em}
	\begin{array}{c}
		\Bigl.\vphantom{
			\begin{array}{c}
				h\\ h\\ h
			\end{array}
		}\bigr\} {\scriptstyle t+1} 
		\\[+5.0ex]
	\end{array} \!+\!\!
	\&  {}
\end{tikzcd}
h\Phi\cdot I_S,
\ \ a\!=\!\! \left(
\begin{array}{cccc}
	0 & \cdots & 0 & 0 \\
	\vdots & \ddots &   & \vdots \\
	0 & \cdots & 0 & 0 \\
	* & \cdots & * & *
\end{array}
\right)
\hspace{-0.8em}
\begin{array}{c}
	\left.\vphantom{
	\begin{array}{cccc}
		0 & \cdots & 0 & 0 \\
		\vdots & \ddots &   & \vdots \\
		0 & \cdots & 0 & 0 \\
		* & \cdots & * & *
	\end{array}
	}\right\}
	\\[-3.2ex] \ 
\end{array} \!\!{\scriptstyle t+1}   \ .
\]
The matrix
\[
\left(
\begin{array}{c|c}
	e_{ii}(h) & a \\
	\hline
	0 & d
\end{array}
\right) \quad \text{commutes with} \quad
\begin{tikzcd}[column sep=-0.5em, row sep=0pt, ampersand replacement=\&]
	\left(
	\begin{array}{c|c}
		e_{t+1,t+1}(h) & 0 \\
		\hline
		0 & B
	\end{array}
	\right)
	\hspace{-0.6em}
	\begin{array}{c}
		\Bigl.\vphantom{
			\begin{array}{c}
				h\\ h\\ h
			\end{array}
		}\bigr\} {\scriptstyle t+1} 
		\\[+5.0ex]
	\end{array}.
	\&  {}
\end{tikzcd}
\]
This implies that $e_{t+1,t+1}(h)\,a=aB,$ and we have $e_{t+1,t+1}(h)\,a=a.$
Since the matrix \(B\) is nilpotent, we conclude that
\[a=0,\qquad
\varphi\big(e_{ii}(h)\big)=
\left(
\begin{array}{c|c}
	e_{ii}(h) & 0 \\
	\hline
	0 & *
\end{array}
\right)+h\Phi\cdot I_S, \quad 
1\le i\le t+1.
\]

We have proved that, up to equivalence, one may always assume condition $P(t+1)$, and hence condition $P(n)$. In other words, we assume that
\[
\varphi\big(e_{ii}(h)\big)=e_{ii}(h)+h\Phi\cdot I_S,\qquad 1\le i\le n.
\]
Since
\[
e_{ij}(R)=\big[[T(R),e_{ii}(1_R)],e_{jj}(1_R)\big], \qquad 1\le i<j\le n,
\]
it follows that
\[
\varphi\big(e_{ij}(hR)\big)=\big(e_{ij}(hS)+h\Phi\cdot I_S\big)/(h\Phi\cdot I_S),
\qquad 1\le i<j\le n.
\]

Let
\[
\varphi\bigl(e_{ij}(h)\bigr)=e_{ij}(u_{ij})+h\Phi\cdot I_S,\qquad 1\le i<j\le n,\quad u_{ij}\in hS.
\]
The equalities
\[
e_{1n}(hR)=\big[[e_{1i}(h),e_{ij}(h)],e_{jn}(R)\big],
\]
\[
e_{1n}(hS)=\big[[e_{1i}(u_{1i}),e_{ij}(u_{ij})],e_{jn}(hS)\big]
\]
imply that, for all $1\le i<j\le n$, the elements $u_{ij}$ are invertible in $hS$.

Now consider the idempotent $f=1-e\in\Phi$. Let
\[
x_1=1_S,\qquad x_2=u_{12}+f,\qquad x_3=u_{12}u_{23}+f,\quad  \ldots,\]
\[ x_n=
u_{12}u_{23}\cdots u_{n-1,n}+f, \quad
x=\operatorname{diag}(1_S,x_2,\ldots,x_n)\in
\operatorname{Diag}(S^{*}).
\]
Then
\[
x\,e_{i,i+1}(u_{i,i+1})\,x^{-1}=e_{i,i+1}(h).
\]
Hence, up to equivalence, one may assume that
\[
\varphi\bigl(e_{i,i+1}(h)\bigr)=e_{i,i+1}(h)+h\Phi\cdot I_S.
\]
This implies that
\[
\varphi\big(e_{ij}(h)\big)=e_{ij}(h)+h\Phi\cdot I_S,\qquad 1\le i<j\le n.
\]

There exist $\Phi$-linear mappings $\chi_{ij}\colon hR\to hS$ such that
\[
\varphi\bigl(e_{ij}(a)\bigr)=e_{ij}(\chi_{ij}(a))+h\Phi\cdot I_S,\qquad a\in hR,\quad 1\le i<j\le n.
\]
If $k<i$, then $e_{kj}(a)=[e_{ki}(h),e_{ij}(a)],$ which implies that $\chi_{kj}=\chi_{ij}$. If $q>j$, then, similarly, $\chi_{ij}=\chi_{iq}.$  Hence, all the mappings $\chi_{ij}$, where $1\le i<j\le n$, are equal to $\chi_{1n}=\chi$.

For an arbitrary element $a\in hR$ and arbitrary indices $1\le i<j\le n$, we have
\[
\varphi\big(e_{ij}(a)\big)=e_{ij}(\chi(a))+h\Phi\cdot I_S, \quad 
\chi(h)=h.
\]
Now, $[e_{12}(a),e_{23}(b)]=e_{13}(ab)$ implies that
\[
[e_{12}(\chi(a)),e_{23}(\chi(b))]=e_{13}(\chi(ab)),
\]
and, therefore, $\chi(a)\chi(b)=\chi(ab),$ for arbitrary elements $a,b\in hR$. Hence, $\chi$ is a homomorphism. This completes the proof of Lemma~\ref{Lm2}. \end{proof}

\begin{lemma}\label{Lm3}
Suppose that $f\in\Phi$ is an idempotent and
\[
\varphi(e_{11}(f))=
\left(
\begin{array}{cccc}
	0 & * & \cdots & * \\
	0 & 0 & \ddots & \vdots \\
	\vdots & \ddots & \ddots & * \\
	0 & \cdots & 0 & -f
\end{array}
\right)+f\Phi\cdot I_S.
\]
Then there exist
an antiisomorphism $\psi\colon fR\to fS$ and invertible
elements $g_1\in UT(n,S)$ and $ g_2\in \operatorname{Diag}(S^{*}),$ such
that $\varphi_h$ lifts to $$-\widehat{g}_1\,\widehat{g}_2\,\widetilde{\psi}.$$ \end{lemma}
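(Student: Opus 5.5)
We reduce Lemma~\ref{Lm3} to Lemma~\ref{Lm2} by composing $\varphi$ with the antiisomorphism $\tau_S$ and a sign. Restrict $\varphi$ to $\varphi_f\colon fT(R)/(f\Phi\cdot I_R)\to fT(S)/(f\Phi\cdot I_S)$. Since $\tau_S$ is an antiisomorphism, $-\tau_S$ is a Lie isomorphism
\[
fT(S)/(f\Phi\cdot I_S)\to fT(S^{\mathrm{op}})/(f\Phi\cdot I_{S^{\mathrm{op}}}).
\]
It preserves the central ideal spanned by the identity, so it descends to the quotients. Put $\pi=-\tau_S\varphi_f$. The map $\tau_S$ sends $e_{nn}(-f)$ to $e_{11}(-f^{\mathrm{op}})$ and reverses the diagonal. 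Hence, modulo $f\Phi\cdot I$,
\[
\pi(e_{11}(f))=\left(\begin{array}{cccc} f & * & \cdots & * \\ 0 & 0 & \ddots & \vdots\\ \vdots & \ddots & \ddots & * \\ 0 & \cdots & 0 & 0\end{array}\right)+f\Phi\cdot I_{S^{\mathrm{op}}}.
\]
This is exactly the hypothesis of Lemma~\ref{Lm2}, with $h$ replaced by $f$ and $S$ replaced by $S^{\mathrm{op}}$. The proof of Lemma~\ref{Lm2} used only the structure of $T(\cdot)$ over an arbitrary unital $\Phi$-algebra, so it applies verbatim.

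Lemma~\ref{Lm2} therefore gives an isomorphism $\chi\colon fR\to fS^{\mathrm{op}}$ and elements $g_1'\in UT(n,S^{\mathrm{op}})$, $g_2'\in\operatorname{Diag}((S^{\mathrm{op}})^*)$ such that $\pi$ lifts to $\widehat{g_1'}\,\widehat{g_2'}\,\widetilde{\chi}$. Set $\psi(a)=\chi(a)$ viewed in $S$, i.e.\ composed with $a^{\mathrm{op}}\mapsto a$. Then $\psi\colon fR\to fS$ is an antiisomorphism. We have $\varphi_f=-\tau_S^{-1}\pi$. Unwinding $\tau_S^{-1}\widetilde{\chi}$ on matrix units gives
\[
e_{ij}(a)\mapsto e_{ij}(\chi(a))\mapsto e_{n+1-j,n+1-i}(\psi(a)),
\]
which is precisely $\widetilde{\psi}$ from Example~2.

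The main step is moving the conjugations across $\tau_S^{-1}$. For $g\in T(S^{\mathrm{op}})$ invertible and $x$ in the source of $\tau_S^{-1}$,
\[
\tau_S^{-1}(gxg^{-1})=\tau_S^{-1}(g^{-1})\,\tau_S^{-1}(x)\,\tau_S^{-1}(g),
\]
since $\tau_S^{-1}$ is an antiautomorphism. Hence $\tau_S^{-1}\widehat{g}=\widehat{g^{\ast}}\,\tau_S^{-1}$ with $g^{\ast}=\tau_S^{-1}(g)^{-1}$. Now $\tau_S^{-1}$ maps $UT(n,S^{\mathrm{op}})$ onto $UT(n,S)$, because the antidiagonal transpose keeps unitriangular matrices upper unitriangular. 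It also maps $\operatorname{Diag}((S^{\mathrm{op}})^*)$ onto $\operatorname{Diag}(S^*)$, because units of $S^{\mathrm{op}}$ are units of $S$. Both are groups, so taking inverses keeps us in them. Setting $g_1=(g_1')^{\ast}$ and $g_2=(g_2')^{\ast}$, we get that $\varphi_f$ lifts to $-\widehat{g}_1\,\widehat{g}_2\,\widetilde{\psi}$, and the order $UT\cdot\operatorname{Diag}$ is preserved.

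Finally, all matrices produced lie in $fT(S)$ plus the identity. As in Lemma~\ref{Lm2}, one adds $1-f$ on the diagonal to get genuinely invertible $g_1,g_2$ whose conjugation acts on $fT(S)$ as required. The remaining obstacle is bookkeeping: checking that the sign and the central ideal $f\Phi\cdot I_S$ behave consistently, and that the diagonal of $\pi(e_{11}(f))$ is exactly $(f,0,\ldots,0)$ modulo scalars.
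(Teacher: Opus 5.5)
Your proposal is correct and follows the paper's proof. In both, you compose with $\tau_S$ and change the sign, so that $-\tau_S\varphi$ satisfies the hypothesis of Lemma~\ref{Lm2} over $S^{\mathrm{op}}$, and then you pull the result back. You also make explicit what the paper leaves implicit: $\tau_S^{-1}\widehat{g}=\widehat{\tau_S^{-1}(g)^{-1}}\,\tau_S^{-1}$ preserves $UT$ and $\operatorname{Diag}$ (keeping the order), and $\tau_S^{-1}\widetilde{\chi}=\widetilde{\psi}$ for the induced antiisomorphism $\psi$.
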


\begin{proof}
As in the proof of Lemma~\ref{Lm1} (see 5)),
consider the composition
\[
T(R)/(\Phi\cdot I_R) \xrightarrow{\ \varphi\ } T(S)/(\Phi\cdot I_S)
\xrightarrow{\ \tau_S\ } T(S^{\mathrm{op}})/(\Phi\cdot I_{S^{\mathrm{op}}}),
\quad
\pi=\tau_S\varphi.
\]
The Lie isomorphism $-\pi$ maps $e_{11}(f)$
to
\[
\left(
\begin{array}{cccc}
	f & 0 & \cdots & 0 \\
	0 & 0 & \cdots & 0 \\
	\vdots & \ddots & \ddots & \vdots \\
	0 & \cdots & \cdots & 0
\end{array}
\right)+f\Phi\cdot I_{S^{\mathrm{op}}}.
\]
Now the assertion follows from Lemma~\ref{Lm2}. \end{proof}

Theorem\ref{Th_1} follows immediately from
Lemmas~\ref{Lm1}, \ref{Lm2}, and \ref{Lm3}.

\section{Jordan Isomorphisms}

\begin{lemma}\label{Lm4}
	Let $\varphi\colon T(R)\to T(S)$ be a Jordan isomorphism. Then there exists an idempotent
	$h\in \Phi$ such that
	\[
	\varphi\bigl(e_{11}(1_R)\bigr)=
\left(
\begin{array}{ccccc}
	h & * & \cdots & & * \\
	0 & 0 & \ddots & & \vdots \\
	\vdots & \ddots & \ddots & \ddots & \vdots \\
		0 & \cdots & \ddots & 0 & * \\
	0 & \cdots & \cdots & 0 & f
\end{array}
\right),
	\qquad f=1_S-h.
	\]
\end{lemma}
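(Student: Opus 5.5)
Let $e=e_{11}(1_R)$ and write
\[
\varphi(e)=\begin{pmatrix}\alpha_1 & * & \cdots & *\\ & \ddots & & \vdots\\ & & & \alpha_n\end{pmatrix},\qquad \alpha_i\in\Phi .
\]
Since $e^2=e$, property (I) gives $\varphi(e)^2=\varphi(e)$. Comparing diagonals, each $\alpha_i$ is an idempotent of $\Phi$. The plan is to show that $\alpha_2=\cdots=\alpha_{n-1}=0$ and $\alpha_n=1-\alpha_1$. We then set $h=\alpha_1$ and $f=1_S-h$.

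We use the Jordan structure the way Lemma~\ref{Lm1} used the Lie structure. For the idempotent $e$, the Peirce operator $x\mapsto e\circ x=ex+xe$ acts on $T(R)$ as follows. It acts on $e_{1j}(R)$ for $j\ge2$ by the identity. It acts on $e_{11}(\Phi)$ by $2$. It acts by $0$ on everything else. Since $\varphi(x\circ y)=\varphi(x)\circ\varphi(y)$, the operator $L=\varphi(e)\circ(\cdot)$ on $T(S)$ has the same behaviour on the image. Acting on $e_{ij}(b)$, $L$ produces $(\alpha_i+\alpha_j)e_{ij}(b)$ modulo $T(S)^+_{ij}$ (and modulo the analogous filtration on the diagonal).

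Step 1: show $\alpha_1+\alpha_n=1$. The ideal $e_{1n}(R)$ is intrinsic: it is the annihilator of $\mathfrak{A}(R)$ under $\circ$ inside $\mathfrak{A}(R)$. Also, $\mathfrak{A}(R)$ is the set of elements $x$ such that $x\circ T(R)$ consists of nilpotents. I would rather use $U_x$ and the Jordan radical, which $\varphi$ preserves, so $\varphi(e_{1n}(R))=e_{1n}(S)$. On $e_{1n}(R)$ the operator $e\circ$ is the identity, hence $(\alpha_1+\alpha_n)b=b$ for all $b\in S$, and $\alpha_1+\alpha_n=1$. Since both are idempotents, $\alpha_1\alpha_n=0$.

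Step 2: show $\alpha_i=0$ for $2\le i\le n-1$. Use $U_e(x)=exe$ together with (II). We have $eT(R)e=e_{11}(\Phi)$ and $U_e(e_{ii}(1))=0$ for $i\ge2$. Also $e\circ e_{ii}(1)=0$ for $i\ge2$. Put $y_i=\varphi(e_{ii}(1))$, which is an idempotent with diagonal entries $\beta^{(i)}_k$. Summing over $i$ with $\varphi(I)$, and using that $\varphi(I)$ is an idempotent acting as identity on everything via $\circ/2$-type relations, I expect $\varphi(I)=I$ from $\varphi(I)\circ\varphi(x)=2\varphi(x)$ and surjectivity. (With possible $2$-torsion, I would instead use $U_{\varphi(I)}=\mathrm{id}$.) Then the diagonals of $\varphi(e),y_2,\dots,y_n$ are orthogonal idempotent vectors summing to $(1,\dots,1)$, because $e\circ e_{ii}=0$ and these are idempotents. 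The relation $\varphi(e)\circ\varphi(x)=\varphi(x)$ for $x\in e_{1j}(R)$, together with the orthogonal decomposition, then forces the $\alpha_k$ to be concentrated at positions $1$ and $n$. Concretely, the condition $\alpha_i\ne0$ for a middle $i$ would make $L$ act by the scalar $\alpha_i+\alpha_j$ on $e_{ij}(S)$ with $i<j$ in the middle, where it must act by $0$ or by an idempotent compatible with the grading. I would derive this, as in claim 3) of Lemma~\ref{Lm1}, from the triple products $\{\varphi(e),e_{ij},e_{jk}\}$.

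Main obstacle: Step 2, specifically handling middle indices without $2$-torsion assumptions and without a ready-made analogue of claim 4). The first approach I would try is to reduce to Lemma~\ref{Lm1} by noting that a Jordan isomorphism preserves $U$-operators and hence the commutator structure $[[x,y],z]=x\circ(y\circ z)-y\circ(x\circ z)$. This identity shows that $\varphi$ preserves the double-bracket maps used in Lemma~\ref{Lm1}. Re-running claims 1)--4) there with triple commutators would then give the block shape, modulo scalars, which the idempotence of the $\alpha_i$ then pins down exactly.
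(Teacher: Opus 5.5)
Step 2 is where the lemma is actually proved, and your proposal has no argument for it. Moreover, the mechanisms you suggest cannot succeed without a $2$-torsion hypothesis, which the lemma does not make.

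The analogue of claim 1) of Lemma~\ref{Lm1} for $L=\varphi(e)\circ(\cdot)$ on $\mathfrak{A}(S)$ gives only $2\alpha_i\alpha_j=0$. The analogue of claim 3), coming from $(e\circ\mathfrak{A}(R))\circ(e\circ\mathfrak{A}(R))=0$ with $i=1<j<k=n$, gives $(\alpha_1+\alpha_j)(\alpha_j+\alpha_n)=0$, which after Step 1 reads $2\alpha_j=0$. That finishes only if $\Phi$ has no $2$-torsion, and the limitation is intrinsic. Take $\Phi=R=S=\mathbb{F}_2$ and the map $x\mapsto x+(x_{11}+x_{22})I_S$, where $x_{ii}$ are the diagonal entries of $x$. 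It is a bijective $\Phi$-linear map satisfying (I), hence preserving $\circ$, $\mathfrak{A}$ and all triple commutators. Yet it sends $e_{11}(1_R)$ to the idempotent $e_{11}(1_S)+I_S$, whose diagonal is $(0,1,\dots,1)$ and satisfies $\alpha_1+\alpha_n=1$. It violates (II), e.g.\ at $x=e_{11}(1_R)$, $y=e_{23}(1_R)$. So none of these can force $\alpha_i=0$: the operator $\varphi(e)\circ(\cdot)$, re-running Lemma~\ref{Lm1} with triple commutators (which are blind to central shifts by $cI_S$), or idempotence of the $\alpha_i$. The one place you use (II) is orthogonality of $\varphi(e)$ and $\varphi(e_{ii}(1_R))$. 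That relates entries in the same position of different idempotents and does not by itself give $\alpha_1\alpha_i=0$.

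The missing idea is to apply (II) with the middle argument in $\mathfrak{A}(R)$. From $e\,\mathfrak{A}(R)\,e=0$ you get $\varphi(e)\,\mathfrak{A}(S)\,\varphi(e)=0$. Since $\varphi(e)\,e_{ij}(1_S)\,\varphi(e)\in e_{ij}(\alpha_i\alpha_j 1_S)+T(S)^+_{ij}$, this gives $\alpha_i\alpha_j=0$ for all $i<j$. Hence $\alpha_i=\alpha_i(\alpha_1+\alpha_n)=0$ for $1<i<n$; this is the paper's argument. Your own remark $eT(R)e=e_{11}(\Phi)$, which gives $\varphi(e)T(S)\varphi(e)=\Phi\varphi(e)$, leads to the same vanishing once $\alpha_1+\alpha_n=1$ is known.

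This argument, like your Step 1, needs $\varphi(\mathfrak{A}(R))=\mathfrak{A}(S)$, and both justifications you give for that are wrong in general. If $2$ is nilpotent in $\Phi$, then $I_R\circ T(R)=2T(R)$ consists of nilpotents although $I_R\notin\mathfrak{A}(R)$. And the Jacobson radical of $T(R)$ is $\mathfrak{A}(R)$ plus the diagonal matrices over the Jacobson radical of $\Phi$, not $\mathfrak{A}(R)$. The identity $[[x,y],z]=x\circ(y\circ z)-y\circ(x\circ z)$ from your last paragraph repairs this. Indeed, $\mathfrak{A}(R)$ contains all triple commutators and is spanned by $[[e_{ii}(1_R),e_{ij}(a)],e_{jj}(1_R)]=e_{ij}(a)$. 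With that fix, your Step 1 coincides with the paper's.
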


\begin{proof}	We have $\mathfrak{A}(R)=\{(a\circ b)\circ c-(a\circ c)\circ b \mid a,b,c\in T(R)\}.$	Hence,	$\varphi\bigl(\mathfrak{A}(R)\bigr)=\mathfrak{A}(S).$	Furthermore,
	\[
	e_{1n}(R)=\{a\in \mathfrak{A}(R)\mid a\circ \mathfrak{A}(R)=(0)\}. 
	\]
	Hence, $\varphi\bigl(e_{1n}(R)\bigr)=e_{1n}(S).$ 
	
	Let
	\[
	\varphi\bigl(e_{11}(1_R)\bigr)=
\left(
\begin{array}{cccc}
	\alpha_1 & * & \cdots & * \\
	0 &\ddots & \ddots & \vdots \\
	\vdots & \ddots & \ddots & * \\
	0 & \cdots & 0 & \alpha_n
\end{array}
\right),
	\qquad \alpha_i\in \Phi.
	\]
For an arbitrary element $e_{1n}(a)$, where $a\in R$, we have
\[
e_{11}(1_R)\circ e_{1n}(a)=e_{1n}(a).
\]
Hence, for an arbitrary element $b\in S$, we have
\[
\varphi\bigl(e_{11}(1_R)\bigr)\circ e_{1n}(b)
= e_{1n}\bigl((\alpha_1+\alpha_n)b\bigr)
= e_{1n}(b).
\]
This implies that $\alpha_1+\alpha_n=1_S$.

We have $e_{11}(1_R)\,\mathfrak{A}(R)\,e_{11}(1_R)=(0)$. Hence,
\[
\varphi\bigl(e_{11}(1_R)\bigr)\,\mathfrak{A}(S)\,\varphi\bigl(e_{11}(1_R)\bigr)=(0).
\]
For an arbitrary element $b\in S$ and indices $1\le i<j\le n$, we have
\[
\left(\begin{array}{ccc}
	\alpha_1  & \cdots & * \\
	\vdots & \ddots & \vdots \\
	0 & \cdots & \alpha_n
\end{array}\right)
e_{ij}(b)
\left(\begin{array}{ccc}
	\alpha_1  & \cdots & * \\
	\vdots & \ddots & \vdots \\
	0 & \cdots & \alpha_n
\end{array}\right)
\in
e_{ij}(\alpha_i\alpha_j b)+T_j^{+}(S).
\]
This implies that $\alpha_i\alpha_j=0$ for all $1\le i<j\le n$.
In particular, for $1<i<n$, we have $\alpha_i\alpha_1=\alpha_i\alpha_n=0.$ Therefore, $\alpha_i(\alpha_1+\alpha_n)=\alpha_i=0.$ This completes the proof of the lemma.
\end{proof}

\begin{lemma}\label{Lm5} Let \(h\in \Phi\) be an idempotent. Consider the restriction of the Jordan automorphism \(\varphi\):
\[
\varphi_h \colon hT(R)\to hT(S)
\]
and suppose that
\[
\varphi_h\bigl(e_{11}(h)\bigr)
=
\left(
\begin{array}{cccc}
	h & * & \cdots & * \\
	0 & 0 & \ddots & \vdots \\
	\vdots & \ddots & \ddots & * \\
	0 & \cdots & 0 & 0
\end{array}
\right).
\]
Then there exist an isomorphism \(\psi \colon hR \to hS\) and invertible elements \(g_1\in UT(n,S)\) and \(g_2\in \operatorname{Diag}(S^*)\) such that
\[
\varphi_h=\widehat{g}_1\,\widehat{g}_2\,\widetilde{\psi}.
\] \end{lemma}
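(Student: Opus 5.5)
We follow the scheme of the proof of Lemma~\ref{Lm2}, with the Lie product replaced by the Jordan triple product $xyx$ and by $x\circ y$. Everything takes place inside $hT(R)$ and $hT(S)$, which are unital algebras with identities $hI_R$ and $hI_S$.

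\emph{Normalising $\varphi_h(e_{11}(h))$.} First we check that $\varphi_h(hI_R)=hI_S$. The element $hI_R$ satisfies $hI_R\circ x=2x$ for all $x$, and the triple product $hI_R\,x\,hI_R$ equals $x$. Since $\varphi$ is a bijection preserving $x^2$ and $xyx$, the image $u=\varphi(hI_R)$ satisfies $u\,y\,u=y$ for all $y\in hT(S)$. Taking $y=hI_S$ gives $u^2=hI_S$, and then $uyu=y$ makes $u$ an involutive Jordan unit. Comparing $e_{1n}$-components, as in the proof of Lemma~\ref{Lm4}, should force $u=hI_S$.

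Each $e_{ii}(h)$ is an idempotent, so $\varphi(e_{ii}(h))$ is an idempotent of $hT(S)$. Let $p_1=\varphi(e_{11}(h))$. By hypothesis its diagonal is $(h,0,\dots,0)$. Conjugating by the unitriangular matrix $x$ used in the proof of Lemma~\ref{Lm2}, we may assume $p_1=e_{11}(h)+q_1$, where $q_1$ is strictly upper triangular and supported on rows and columns $2,\dots,n$. Idempotency gives $q_1^2=q_1$, and since $q_1$ is nilpotent, $q_1=0$. Hence $p_1=e_{11}(h)$.

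\emph{Diagonalising the remaining idempotents by induction.} We establish $P(t)$: $\varphi(e_{ii}(h))=e_{ii}(h)$ for $i\le t$, with $g_1$ modified along the way. The relations $e_{ii}(h)\circ e_{jj}(h)=0$ for $i\neq j$ are preserved by $\varphi$, and $\sum_i\varphi(e_{ii}(h))=hI_S$. Together with $P(t)$, these show that $p=\varphi(e_{t+1,t+1}(h))$ is an idempotent orthogonal (in the Jordan sense) to $e_{11}(h),\dots,e_{tt}(h)$. Since $e_{ii}\circ p=0$ and the diagonal entries lie in the idempotents of $h\Phi$, $p$ is supported on the lower-right $(n-t)\times(n-t)$ block.

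To determine the diagonal of $p$, we use $e_{1,t+1}(hR)=e_{11}(h)\,\mathfrak{A}(R)\,e_{t+1,t+1}(h)$. This product is expressible through Jordan operations as $e_{11}\circ(e_{t+1,t+1}\circ a)$ minus correction terms. We then argue with the filtration $\mathfrak{A}^k$, which is preserved because $\mathfrak{A}^k$ is definable via $\circ$ (see the proof of Lemma~\ref{Lm4}). The aim is to show that the diagonal of $p$ has $h$ exactly at position $t+1$.

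This is the main obstacle. A priori, the diagonal of $p$ could be an idempotent splitting of $h$ spread across several positions, and the Jordan setting lacks the clean eigenvalue argument given by $\operatorname{ad}$. The first thing to try is the degree argument of Lemma~\ref{Lm2}: a diagonal entry of $p$ at position $j$ produces a nonzero component of $e_{11}(h)\,\mathfrak{A}(S)\,p$ in $e_{1j}$, and this must lie in $\mathfrak{A}^t\setminus\mathfrak{A}^{t+1}$. That forces $j=t+1$ after multiplying by suitable idempotents of $h\Phi$.

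Once the diagonal of $p$ is known, the conjugation and nilpotency step from Lemma~\ref{Lm2} makes $p=e_{t+1,t+1}(h)$ without disturbing the earlier $e_{ii}(h)$.

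\emph{Off-diagonal entries and the isomorphism $\psi$.} With every $e_{ii}(h)$ fixed, the Peirce spaces are preserved: $e_{ij}(hR)=e_{ii}\,T\,e_{jj}$, and this is recovered from Jordan data as $\{x : e_{ii}\circ x=x=e_{jj}\circ x\}$. Hence $\varphi(e_{ij}(a))=e_{ij}(\chi_{ij}(a))$.

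Invertibility of $u_{ij}=\chi_{ij}(h)$ follows as in Lemma~\ref{Lm2}, and a diagonal conjugation $g_2$ makes $u_{i,i+1}=h$. Next we use that for $i<j<k$,
\[
e_{ij}(a)\circ e_{jk}(b)=e_{ik}(ab),
\]
since the other product $e_{jk}(b)e_{ij}(a)$ vanishes. Applying this repeatedly shows that all $\chi_{ij}$ coincide with a single map $\chi$, and that $\chi(a)\chi(b)=\chi(ab)$. Thus $\psi=\chi$ is an isomorphism $hR\to hS$. Finally, $\varphi_h$ and $\widehat g_1\widehat g_2\widetilde\psi$ agree on the diagonal and on every $e_{ij}(hR)$, so they agree on all of $hT(R)$.
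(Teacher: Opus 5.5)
Your overall scheme (normalise $\varphi(e_{11}(h))$, induct on $P(t)$ using orthogonality and the filtration $\mathfrak{A}(S)^k$, then read off Peirce components and a single map $\chi$) is the paper's. However, the induction step has a real hole exactly where you placed ``the main obstacle.''

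Write the diagonal of $p=\varphi(e_{t+1,t+1}(h))$ as $(\gamma_1,\dots,\gamma_n)$, with the $\gamma_j$ idempotents of $h\Phi$. Orthogonality kills $\gamma_1,\dots,\gamma_t$. The degree argument kills $\gamma_{t+2},\dots,\gamma_n$: for $b\in hS$ and $j\ge t+2$, the element $(e_{1j}(b)\circ e_{11}(h))\circ p=e_{1j}(b)\,p\in e_{1j}(\gamma_j b)+T^+_{1j}(S)$ lies in $\mathfrak{A}(S)^{t+1}$, so it is $0$. That only shows the diagonal of $p$ is $\gamma_{t+1}e_{t+1,t+1}$ for \emph{some} idempotent $\gamma_{t+1}\in h\Phi$. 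Nothing you have proved gives $\gamma_{t+1}=h$, and you need that before the conjugation step can produce $e_{t+1,t+1}(h)$.

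Your premise that the diagonal of $p$ is ``an idempotent splitting of $h$'' is unjustified. The identity $\sum_s\varphi(e_{ss}(h))=hI_S$ controls, at each fixed position, the sum over $s$ of the entries of the different $\varphi(e_{ss}(h))$. It says nothing about the sum along the diagonal of a single $p$.

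The paper closes this gap by surjectivity. The set $(\mathfrak{A}(R)^t\circ e_{t+1,t+1}(h))\circ\mathfrak{A}(R)^{n-t-1}=e_{1n}(hR)$ maps onto $e_{1n}(hS)$, while the corresponding expression in $S$ lies in $e_{1n}(\gamma_{t+1}S)$. So $\gamma_{t+1}$ is invertible in $hS$, and being idempotent it equals $h$.

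A shorter fix uses injectivity. Put $\epsilon=h-\gamma_{t+1}$, an idempotent of $h\Phi$. Then $\epsilon p=\varphi(e_{t+1,t+1}(\epsilon))$ is an idempotent with zero diagonal, hence nilpotent, hence $0$, so $\epsilon=0$. Alternatively, run the degree argument for every $\varphi(e_{ss}(h))$ with $s>t$ to concentrate its diagonal at position $s$. Then read off position $t+1$ in $\sum_s\varphi(e_{ss}(h))=hI_S$.

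Two smaller points. First, $e_{ii}(h)\circ p=0$ gives only $2\gamma_i=0$ on the diagonal. That does not kill an idempotent when $2$ is a zero divisor in $\Phi$. You need the triple product $e_{ii}(h)\,p\,e_{ii}(h)=\varphi\bigl(e_{ii}(h)e_{t+1,t+1}(h)e_{ii}(h)\bigr)=0$, which is the preservation of orthogonal idempotents the paper cites from Jacobson and Rickart. Second, your step $\varphi(hI_R)=hI_S$ needs no $e_{1n}$-comparison. The element $u=\varphi(hI_R)$ is idempotent because $hI_R$ is, and you already have $u^2=hI_S$, so $u=hI_S$.
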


\begin{proof} As in the proof of Lemma~\ref{Lm2}, we find an element \(g\in UT(n,S)\) such that 
\[
g\,\varphi\bigl(e_{11}(h)\bigr)\,g^{-1}
=
\begin{pmatrix}
	h & 0 & \cdots & 0 \\
	0   & 0 &   \cdots     & * \\
	\vdots &  & \ddots & \vdots \\
	0   & 0 & \cdots & 0
\end{pmatrix}.
\] Hence, without loss of generality, we assume
that
\[
\varphi\bigl(e_{11}(h)\bigr)=
\begin{pmatrix}
	h & 0 & \cdots & 0 \\
	0 & 0 & \cdots & * \\
	\vdots & \vdots & \ddots & \vdots \\
	0 &0 & \cdots & 0
\end{pmatrix}.
\]
 
Let \(1\le t\le n-1\). Suppose that condition \(P(t)\) holds:
\[
\varphi\bigl(e_{ii}(h)\bigr)=
\begin{tikzcd}[column sep=-0.5em, row sep=0pt, ampersand replacement=\&]
	\left(
	\begin{array}{c|c}
		e_{ii}(h) & 0 \\
		\hline
		0 &
		\begin{array}{ccc}
			0  & \cdots & * \\
			\vdots & \ddots & \vdots \\
			0 & \cdots & 0
		\end{array}
	\end{array}
	\right)
	\hspace{-0.6em}
	\begin{array}{c}
		\Bigl.\vphantom{
			\begin{array}{c}
				h\\ h\\ h
			\end{array}
		}\bigr\} {\scriptstyle t} 
		\\[+12.3ex]
	\end{array}
	\& {}
\end{tikzcd}
\qquad 1\le i\le t.
\]
By \cite{Jacobson2}, Jordan homomorphisms map pairwise orthogonal idempotents to pairwise orthogonal idempotents. Hence, for $1\le i\le t$, we obtain 
\[
\varphi\bigl(e_{t+1,t+1}(h)\bigr)
\begin{tikzcd}[column sep=-0.5em, row sep=0pt, ampersand replacement=\&]
	\left(
	\begin{array}{c|c}
		e_{ii}(h) & 0 \\
		\hline
		0 & *
	\end{array}
	\right)
	\hspace{-0.6em}
	\begin{array}{c}
		\Bigl.\vphantom{
			\begin{array}{c}
				h\\ h\\ h
			\end{array}
		}\bigr\} {\scriptstyle t} 
		\\[+5.1ex]
	\end{array}
	\&
\end{tikzcd}
=	\left(
\begin{array}{c|c}
	e_{ii}(h) & 0 \\
	\hline
	0 & *
\end{array}
\right)
\varphi\bigl(e_{t+1,t+1}(h)\bigr)
=0.\] 
This implies that
\[
\varphi\bigl(e_{t+1,t+1}(h)\bigr)=
\begin{tikzcd}[column sep=-0.5em, row sep=0pt, ampersand replacement=\&]
	\left(
	\begin{array}{c|c}
	0 & 0 \\
		\hline
		0 & *
	\end{array}
	\right)
	\hspace{-0.6em}
	\begin{array}{c}
		\Bigl.\vphantom{
			\begin{array}{c}
				h\\ h\\ h
			\end{array}
		}\bigr\} {\scriptstyle t} 
		\\[+5.1ex]
	\end{array}
	\&
\end{tikzcd}.
\]

Let
\[
\varphi\bigl(e_{t+1,t+1}(h)\bigr)=
\begin{tikzcd}[column sep=-0.5em, row sep=0pt, ampersand replacement=\&]
	\left(
	\begin{array}{c|c}
	0 & 0 \\
		\hline
		0 &
		\begin{array}{ccc}
			\gamma_1  & \cdots & * \\
			\vdots & \ddots & \vdots \\
			0 & \cdots & \gamma_{n-t}
		\end{array}
	\end{array}
	\right),
\end{tikzcd} \quad 
\gamma_i\in h\Phi,
\ \ 
1\le i\le n-t.
\]

For an arbitrary \(1\le i\le n-1\), we have
\[
\mathfrak{A}(R)^i=
\underbrace{\Big(\cdots\big((\mathfrak{A}(R)\circ \mathfrak{A}(R))\circ\mathfrak{A}(R)\big)\cdots\Big)\circ \mathfrak{A}(R)}_{i}=
\]
\[
\begin{array}{c}
	\hspace{-6.1em}\overbrace{\hspace{2.9em}}^{\,i-1} \\[-0.4em]
	\left(
	\begin{array}{ccccc}
		0 & \cdots & R & \cdots & R \\
	0	& \ddots & \ddots & \ddots & \vdots \\
	\vdots	& & \ddots & \ddots & R \\
		\vdots	& & & \ddots & 0 \\
	0	& \cdots & \cdots & \cdots & 0
	\end{array}
	\right)
\end{array}
\]
Hence,
\[
\bigl(\mathfrak{A}(R)\, \circ \, e_{11}(h)\bigl)\, \circ \, e_{t+1,t+1}(h)
=
e_{1,t+1}(hR)\subseteq \mathfrak{A}(R)^t,
\]
\[
e_{1,t+1}(hR)\cap \mathfrak{A}(R)^{t+1}=(0).
\]

For an arbitrary $t+1<j\le n$, we have
\[
\left(
e_{1j}(hS)\circ
		\left(\begin{array}{c|c}
	\begin{matrix}
	h & \cdots & \cdots & 0 \\
	0 & 0 &  \cdots & 0\\
\vdots & 	\vdots & \ddots & \vdots \\
	0 & 0 & \cdots & 0
\end{matrix} & 0 \\
	\hline
	0 &
	\begin{matrix}
		0 & \cdots & * \\
	\vdots & \ddots & \vdots \\
		0 & \cdots & 0
	\end{matrix}
\end{array} 	\right)
\right)
\circ
	\left(\begin{array}{c|c}
	0 & 0 \\
		\hline
	0 &
	\begin{matrix}
		\gamma_1 & \cdots & * \\
		\vdots & \ddots & \vdots \\
		0 & \cdots & \gamma_{n-t}
	\end{matrix}
\end{array}\right)
\in \] 
\[e_{1j}(\gamma_jS)+T_{1j}^{+}(S).
\]
Hence, $\gamma_{t+1}=\cdots=\gamma_n=0.$

The equality
\[
\bigl(\mathfrak{A}(R)^t\circ e_{t+1,t+1}(h)\bigr)\circ
\mathfrak{A}(R)^{\,n-t-1}
=
e_{1n}(hR)
\]
implies
\[
\mathfrak{A}(S)^t\circ
	\left(\begin{array}{c|c}
	0 & 0 \\
		\hline
	0 &
	\begin{matrix}
		\gamma_{t+1} & 0 & \cdots & 0 \\
		0 & 0 & \cdots & 0 \\
		\vdots & \ddots & \ddots & \vdots\\
	0 & \cdots & \cdots & 0
	\end{matrix}
	\end{array}
\right)
\circ
\mathfrak{A}(S)^{\,n-t-1}
=
e_n(hR).
\]
Therefore, the element \(\gamma_{t+1}\) is invertible in \(hS\). Since \(\gamma_{t+1}\) is an idempotent, we obtain $\gamma_{t+1}=h.$ Now,
\[
\varphi\bigl(e_{t+1,t+1}(h)\bigr)=
	\left(\begin{array}{c|c}
	0 & 0 \\
	\hline
	0 &
	\begin{matrix}
		h & * & \cdots & * \\
		0 & 0 & \cdots & * \\
		\vdots & \ddots & \ddots & \vdots\\
		0 & \cdots & \cdots & 0
	\end{matrix}
\end{array}
\right).
\]

As in the proof of Lemma~\ref{Lm2}, we find an element
\[
x\, \in
\begin{tikzcd}[column sep=-0.5em, row sep=0pt, ampersand replacement=\&]
	\left(
	\begin{array}{c|c}
	\begin{array}{ccc}
	1 & \cdots & 0 \\
		\vdots & \ddots & \vdots \\
		0 & \cdots & 1
	\end{array} & 0 \\
		\hline
		0 &
		\begin{array}{cccc}
			1 & S & \cdots & S \\
			0 & 1 & \cdots & 0 \\
				\vdots & \ddots & \ddots & \vdots \\
			0 & \cdots & \cdots & 1
		\end{array}
	\end{array}
	\right)
	\hspace{-0.6em}
	\begin{array}{c}
		\Biggl.\vphantom{
			\begin{array}{c}
				h\\ h\\ h
			\end{array}
		}\Biggr\} {\scriptstyle t} 
		\\[15.3ex]
	\end{array}
	\& {}
\end{tikzcd}
\]
such that
\[
x\,\varphi\bigl(e_{t+1,t+1}(h)\bigr)\,x^{-1}
=
\begin{tikzcd}[column sep=-0.5em, row sep=0pt, ampersand replacement=\&]
	\left(
	\begin{array}{c|c}
		e_{t+1, t+1}(h) & 0 \\
		\hline
		0 &
		\begin{array}{ccc}
			0 &  \cdots & * \\
		\vdots & \ddots &  \vdots \\
			0 & \cdots  & 0
		\end{array}
	\end{array}
	\right)
	\hspace{-0.6em}
	\begin{array}{c}
		\Bigl.\vphantom{
			\begin{array}{c}
				h\\ h\\ h
			\end{array}
		}\bigr\} {\scriptstyle t+1} 
		\\[+12.3ex]
	\end{array}
	\& . {}
\end{tikzcd}
\]

The idempotents
\[
x\,\varphi\bigl(e_{ii}(h)\bigr)\,x^{-1}
=
\begin{tikzcd}[column sep=-0.5em, row sep=0pt, ampersand replacement=\&]
	\left(
	\begin{array}{c|c}
		e_{ii}(h) & 0 \\
		\hline
		0 & *
	\end{array}
	\right)
	\hspace{-0.6em}
	\begin{array}{c}
		\Bigl.\vphantom{
			\begin{array}{c}
				h\\ h\\ h
			\end{array}
		}\bigr\} {\scriptstyle t} 
		\\[+5.5ex]
	\end{array},
	\& {}
\end{tikzcd}
\ \  1\le i\le t,
\]
are orthogonal to
\[
\begin{tikzcd}[column sep=-0.5em, row sep=0pt, ampersand replacement=\&]
	\left(
	\begin{array}{c|c}
		e_{t+1,t+1}(h) & 0 \\
		\hline
		0 & *
	\end{array}
	\right)
	\hspace{-0.6em}
	\begin{array}{c}
		\Bigl.\vphantom{
			\begin{array}{c}
				h\\ h\\ h
			\end{array}
		}\bigr\} {\scriptstyle t+1} 
		\\[+5.5ex]
	\end{array}.
	\& {}
\end{tikzcd}
\]
This implies that
\[
x\,\varphi\bigl(e_{ii}(h)\bigr)\,x^{-1}
=
\begin{tikzcd}[column sep=-0.5em, row sep=0pt, ampersand replacement=\&]
	\left(
	\begin{array}{c|c}
		e_{ii}(h) & 0 \\
		\hline
		0 & *
	\end{array}
	\right)
	\hspace{-0.6em}
	\begin{array}{c}
		\Bigl.\vphantom{
			\begin{array}{c}
				h\\ h\\ h
			\end{array}
		}\bigr\} {\scriptstyle t+1} 
		\\[+5.5ex]
	\end{array}.
	\& {}
\end{tikzcd}
\]

We have proved that if the Jordan isomorphism \(\varphi\) satisfies \(P(t)\), then there exists an element \(g\in UT(n,S)\) such that \(\widehat{g}\,\varphi\) satisfies \(P(t+1)\). Hence, without loss of generality, we may assume that \(\varphi\) satisfies \(P(n)\), that is, $\varphi\bigl(e_{ii}(h)\bigr)=e_{ii}(h)$ for   $1\le i\le n.$ As in the proof of Lemma~\ref{Lm2}, this implies that
\[
\varphi\bigl(e_{ij}(hR)\bigr)
=
\varphi\Bigl((T(R)\circ e_{ii}(h))\circ e_{jj}(h)\Bigr)
=
e_{ij}(hS), \quad 1\le i<j\le n.
\]

Let \(\varphi\bigl(e_{ij}(h)\bigr)=e_{ij}(u_{ij})\), where \(u_{ij}\in hS\) and \(1\le i<j\le n\). The equality
\[
e_{1n}(hR)=\bigl(e_{1i}(h)\circ e_{ij}(h)\bigr)\circ e_{jn}(hR)
\]
implies
\[
e_{1n}(hS)
=
\bigl(e_{1i}(u_{1i})\circ e_{ij}(u_{ij})\bigr)\circ e_{jn}(hS)
=
e_{1n}(u_{1i}u_{ij}S).
\]
Hence, all elements \(u_{ij}\) are invertible in \(hS\).

As in Sec.~\ref{Sec_Lie}, we consider the diagonal matrix \(x\in \operatorname{Diag}(S^*)\),
\[
x=\operatorname{diag}(1_S,\;u_{12},\;u_{12}u_{23},\;\ldots,\;u_{12}u_{23}\cdots u_{n-1,n}).
\]
Then
\[
x\,\varphi\bigl(e_{i,i+1}(h)\bigr)x^{-1}=e_{i,i+1}(h),
\qquad 1\le i\le n-1,
\]
and therefore
\[
x\,\varphi\bigl(e_{ij}(h)\bigr)x^{-1}=e_{ij}(h),
\qquad 1\le i<j\le n.
\]

As in Sec.~\ref{Sec_Lie}, we consider the diagonal matrix \(x\in \operatorname{Diag}(S^*)\),
\[
x=\operatorname{diag}(1_S,\;u_{12}+f,\;u_{12}u_{23}+f,\;\ldots,\;u_{12}u_{23}\cdots u_{n-1,n}+f).
\]
Then
\[
x\,\varphi\bigl(e_{i,i+1}(h)\bigr)x^{-1}=e_{i,i+1}(h),
\qquad 1\le i\le n-1,
\]
and therefore
\[
x\,\varphi\bigl(e_{ij}(h)\bigr)x^{-1}=e_{ij}(h),
\qquad 1\le i<j\le n.
\]
Replacing \(\varphi\) with \(\widehat{x}\,\varphi\), we may assume that
\[
\varphi\bigl(e_{ij}(h)\bigr)=e_{ij}(h),
\qquad 1\le i<j\le n.
\]
By literally following the arguments of Sec.~\ref{Sec_Lie}, we obtain an isomorphism \(\chi\colon hR\to hS\) such that
\[
\varphi\bigl(e_{ij}(a)\bigr)=e_{ij}\bigl(\chi(a)\bigr),
\quad a\in hR,\quad  1\le i<j\le n.
\]
This completes the proof of Lemma~\ref{Lm5}. \end{proof}

\begin{lemma}\label{Lm6}
Suppose that \(f\in \Phi\) is an idempotent
and that
\[
\varphi\bigl(e_{11}(f)\bigr)=
\begin{pmatrix}
	0 & * & \cdots & * \\
	\vdots & \ddots & \ddots & \vdots \\
		0 & \cdots & 0 & * \\
	0 & \cdots & 0 & f
\end{pmatrix}.
\]
Then there exist an antiisomorphism $\psi\colon fR\to fS$ and invertible elements $g_1\in UT(n,S)$ and  $g_2\in \operatorname{Diag}(S^*)$  such that
\[
\varphi_h=\widehat{g}_1\,\widehat{g}_2\,\widetilde{\psi}.
\] \end{lemma}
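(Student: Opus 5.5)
The plan is to reduce Lemma~\ref{Lm6} to Lemma~\ref{Lm5} by composing with the antiisomorphism $\tau_S$, in the same way Lemma~\ref{Lm3} was deduced from Lemma~\ref{Lm2}.

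\textbf{Step 1: the composition.} Consider
\[
\pi=\tau_S\varphi\colon T(R)\to T(S^{\mathrm{op}}).
\]
Since $\tau_S$ is an antiisomorphism, it is a Jordan isomorphism. Hence $\pi$ is a Jordan isomorphism, and so is its restriction $\pi_f\colon fT(R)\to fT(S^{\mathrm{op}})$. The map $\tau_S$ sends $e_{ij}(a)$ to $e_{n+1-j,n+1-i}(a^{\mathrm{op}})$. So it carries the $(n,n)$-entry $f$ to the $(1,1)$-position, and the strictly upper triangular part with zero diagonal to a strictly upper triangular part with zero diagonal. Therefore
\[
\pi\bigl(e_{11}(f)\bigr)=
\begin{pmatrix}
f & * & \cdots & * \\
0 & 0 & \ddots & \vdots \\
\vdots & \ddots & \ddots & * \\
0 & \cdots & 0 & 0
\end{pmatrix},
\]
which is exactly the hypothesis of Lemma~\ref{Lm5}, with $h$ replaced by $f$ and $S$ by $S^{\mathrm{op}}$. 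Note that $fS^{\mathrm{op}}=(fS)^{\mathrm{op}}$.

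\textbf{Step 2: apply Lemma~\ref{Lm5}.} Lemma~\ref{Lm5} gives an isomorphism $\chi\colon fR\to fS^{\mathrm{op}}$ and elements $g_1'\in UT(n,S^{\mathrm{op}})$, $g_2'\in\operatorname{Diag}((S^{\mathrm{op}})^*)$ with
\[
\pi_f=\widehat{g}_1'\,\widehat{g}_2'\,\widetilde{\chi}.
\]

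\textbf{Step 3: transport back.} Since $\tau_S$ is bijective, $\varphi_f=\tau_S^{-1}\widehat{g}_1'\widehat{g}_2'\widetilde{\chi}$. Define $\psi(a)=(\chi(a))^{\mathrm{op}}$, read as an element of $fS$. As the composition of an isomorphism with the antiisomorphism $S^{\mathrm{op}}\to S$, this $\psi$ is an antiisomorphism $fR\to fS$. One checks on matrix units that
\[
\tau_S^{-1}\,\widetilde{\chi}=\widetilde{\psi}.
\]
Indeed, $\widetilde{\chi}$ keeps $e_{ij}(a)$ at position $(i,j)$, and $\tau_S^{-1}$ moves it to $(n+1-j,n+1-i)$ with entry $\chi(a)^{\mathrm{op}}$, which is the rule defining $\widetilde{\psi}$.

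For the conjugations, use that $\tau_S$ is an antiisomorphism. Then $\tau_S^{-1}(gxg^{-1})=\tau_S^{-1}(g)^{-1}\,\tau_S^{-1}(x)\,\tau_S^{-1}(g)$, so
\[
\tau_S^{-1}\widehat{g}=\widehat{\tau_S^{-1}(g)^{-1}}\,\tau_S^{-1}.
\]
Moreover, $\tau_S^{-1}$ maps $UT(n,S^{\mathrm{op}})$ onto $UT(n,S)$ and $\operatorname{Diag}((S^{\mathrm{op}})^*)$ onto $\operatorname{Diag}(S^*)$. Inversion preserves both groups. Setting $g_1=\tau_S^{-1}(g_1')^{-1}$ and $g_2=\tau_S^{-1}(g_2')^{-1}$ gives
\[
\varphi_f=\widehat{g}_1\,\widehat{g}_2\,\widetilde{\psi}.
\]
This is the claim, where $\varphi_h$ in the statement is read as $\varphi_f$.

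\textbf{Main obstacle.} The only delicate point is the bookkeeping in Step 3. One must verify that $\tau_S^{-1}$ intertwines conjugations with conjugations by elements of the correct groups, respecting the order of $g_1$ and $g_2$. Since $\tau_S^{-1}$ reverses products, it also reverses the order of the factors. So a priori one gets the form $\widehat{g}_2\widehat{g}_1$ rather than $\widehat{g}_1\widehat{g}_2$.

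To fix this, I would use that $\operatorname{Diag}(S^*)$ normalizes $UT(n,S)$. Then $g_2g_1=(g_2g_1g_2^{-1})\,g_2$ with $g_2g_1g_2^{-1}\in UT(n,S)$, which restores the stated order.
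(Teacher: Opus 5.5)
Your proof is correct and follows the paper's argument exactly: the paper proves Lemma~\ref{Lm6} by applying Lemma~\ref{Lm5} to $\tau_S\varphi$, and your Steps 1--3 supply the bookkeeping the paper leaves implicit. One correction to your last paragraph: the order of $\widehat{g}_1,\widehat{g}_2$ is not reversed, because applying $\tau_S^{-1}\widehat{g}=\widehat{\tau_S^{-1}(g)^{-1}}\,\tau_S^{-1}$ twice already gives $\widehat{g}_1\widehat{g}_2$ (the product reversal and the inversion cancel, as your own Step 3 computation shows), so the normalizer fix is harmless but unnecessary.
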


\begin{proof}
As in the proof of Lemma~\ref{Lm3}, it is sufficient to apply Lemma~\ref{Lm5} to the composition
\[
T(R)\xrightarrow{\ \varphi\ }T(S)\xrightarrow{\ \tau_S\ }T(S^{\mathrm{op}}).
\] \end{proof}

Theorem~\ref{Th_2} follows immediately from Lemmas~\ref{Lm4}, \ref{Lm5}, and \ref{Lm6}.

		\section*{Acknowledgment}

\medskip

The author is grateful to Matej Bre\v{s}ar, Iryna Kashuba, and Efim Zelmanov for helpful discussions.

\end{document}